\newtheorem{theorem}{Theorem}[section] 
\newtheorem{lemma}[theorem]{Lemma}     
\newtheorem{corollary}[theorem]{Corollary}
\newtheorem{proposition}[theorem]{Proposition}
\theoremstyle{definition}
\newtheorem{definition}[theorem]{Definition}
\theoremstyle{remark}
\newtheorem{remark}[theorem]{Remark}
\numberwithin{equation}{section}
\newcommand{\C}{\mathbb{C}}
\newcommand{\R}{\mathbb{R}}
\newcommand{\N}{\mathbb{N}}
\newcommand{\de}{\partial}
\newcommand{\eps}{\varepsilon}
\title[Skew Carleson measures]
 {Skew Carleson measures in strongly pseudoconvex domains} 
\author[Marco Abate]{Marco Abate}
\address{Marco Abate\\ Dipartimento di Matematica\\ Universit\`a di Pisa\\ Largo Pontecorvo 5, 56127 Pisa\\ Italy.} \email{marco.abate@unipi.it}
\author[Jasmin Raissy]{Jasmin Raissy*}
\address{Jasmin Raissy\\ Institut de Math\'ematiques de Toulouse, UMR5219\\ Universit\'e de Toulouse, CNRS\\ UPS IMT, 118 route de Narbonne, F-31062 Toulouse Cedex 9\\ France} 
\email{jraissy@math.univ-toulouse.fr}
\thanks{2010 Mathematics Subject Classification: 32A36 (primary), 32A25, 32Q45, 32T15, 46E22, 46E15, 47B35 (secondary).}
\thanks{\textit{Keywords:} Carleson measure; Toeplitz operator; strongly pseudoconvex domain; weighted Bergman space}
\thanks{$^{*}$Partially supported by ANR
 project LAMBDA,  ANR-13-BS01-0002 and by
 the FIRB2012 grant ``Differential Geometry and Geometric Function Theory'', RBFR12W1AQ 002.}
\begin{document}

\begin{abstract}
Given a bounded strongly pseudoconvex domain $D$ in $\C^n$ with smooth boundary, we give a characterization through products of functions in weighted Bergman spaces of $(\lambda,\gamma)$-skew Carleson measures on $D$, with $\lambda>0$ and $\gamma>1-\frac{1}{n+1}$.

\end{abstract}

\maketitle

\section{Introduction}

Carleson measures are a powerful tool and an interesting object to study. They have been introduced by Carleson \cite{C} in his celebrated solution of the corona problem to study the structure of the Hardy spaces of the unit disc $\Delta\subset\C$. Let $A$ be a Banach space of holomorphic functions on a domain $D\subset\C^n$; given $p\ge 1$, a finite positive Borel measure $\mu$ on~$D$ is a \emph{Carleson measure} of~$A$ (for~$p$)
if there is a continuous inclusion $A\hookrightarrow L^p(\mu)$, that is there exists a constant $C>0$ such that
\[
\forall f\in A \qquad \int_D |f|^p\,d\mu\le C\|f\|_A^p\;.
\]

In this paper, we are interested in Carleson measures for Bergman spaces, that is spaces of $L^p$ holomorphic functions, usually denoted by $A^p$ (relationships between Carleson measures for Hardy spaces and Carleson measures for Bergman spaces can be found in~\cite{Am}). Carleson measures for Bergman spaces have been studied by several authors, including Hastings \cite{H} (see also Oleinik and Pavlov \cite{OP} and Oleinik \cite{O}) for the Bergman spaces~$A^p(\Delta)$, Cima and Wogen \cite{CW} in the case of the unit ball $B^n\subset\C^n$, Zhu \cite{Zh1} in the case of bounded symmetric domains, Cima and Mercer \cite{CM} for Bergman spaces in strongly pseudoconvex domains $A^p(D)$, and Luecking \cite{Luecking} for more general domains. 

Given $D\subset\subset\C^n$ a bounded strongly pseudoconvex domain in $\C^n$ with smooth $C^\infty$ boundary, a positive finite Borel measure $\mu$ on~$D$ and $0<p<+\infty$, we denote by $L^p(\mu)$ the set of complex-valued $\mu$-measurable functions $f\colon D\to\C$ such that
\[
\|f\|_{p,\mu}:=\left[\int_D |f(z)|^p\,d\mu(z)\right]^{1/p}<+\infty\;.
\]
If $\mu=\delta^\alpha\nu$ for some $\alpha\in\R$, where $\delta(z)=d(z,\partial D)$ is the distance from the boundary of $D$ and $\nu$ is the Lebesgue measure, the \emph{weighted Bergman space} is defined as
\[
A^p(D,\alpha)=L^p(\delta^\alpha \nu)\cap\mathcal{O}(D)\;,
\]
where $\mathcal{O}(D)$ denotes the space of holomorphic functions on~$D$, endowed with the topology of uniform convergence on compact subsets. Together with Saracco, we gave in \cite{ARS} a characterization of Carleson measures of weighted Bergman spaces in terms of the intrinsic Kobayashi geometry of the domain. 

It is a natural question to study Carleson measures for different exponents, that is the embedding of weighted Bergman spaces $A^p(D, \alpha)$ into $L^q$ spaces. Given, $0<p$, $q<+\infty$ and $\alpha>-1$, a finite positive Borel measure $\mu$ is called a \emph{$(p,q;\alpha)$-skew Carleson measure} if $A^p(D,\alpha)\hookrightarrow L^q(\mu)$ continuously, that is there exists a constant $C>0$ such that
\[
\int_D |f(z)|^q\,d\mu(z)\le C \|f\|_{p,\alpha}^q
\]
for all $f\in A^p(D,\alpha)$. 
Investigation on $(p,q;\alpha)$-skew Carleson measure has been started by Luecking in \cite{Luecking2} and recently extended by Hu, Lv and Zhu in \cite{HLZ}, where these measures are called $(p,q,\alpha)$ Bergman Carleson measures. It turns out (see \cite{HLZ} and the next section for details) that the property of being $(p,q;\alpha)$-skew Carleson depends only on the quotient $q/p$ and on $\alpha$, allowing us to define \emph{$(\lambda,\gamma)$-skew Carleson} measures for $\lambda>0$ and $\gamma>1-\frac{1}{n+1}$. Roughly speaking, a measure is $(\lambda,\gamma)$-skew Carleson if and only if it is a $(p,q;(n+1)(\gamma-1))$-skew Carleson measure for some (and hence any) $p,q$ such that $q/p =\lambda$ (see Definition \ref{def:sCdef}).

\smallskip
The main result of this paper gives a characterization of $(\lambda,\gamma)$-skew Carleson measures on bounded  strongly pseudoconvex domains through products of functions in weighted Bergman spaces.

\begin{theorem}
\label{maintheorem}
Let $D\subset\subset\C ^n$ be a bounded strongly
pseudoconvex domain, and let $\mu$ be a positive finite Borel measure on $D$. Fix an integer $k\ge 1$, and let $0<p_j$,~$q_j<\infty$ and $1-\frac{1}{n+1}<\theta_j$ be given, for $j=1,\ldots, k$. Set
\[
\lambda=\sum_{j=1}^k\frac{q_j}{p_j}\qquad\mathit{and}\qquad\gamma=\frac{1}{\lambda}\sum_{j=1}^k \theta_j\frac{q_j}{p_j}\;.
\]
Then $\mu$ is a $(\lambda,\gamma)$-skew Carleson measure if and only if there exists $C>0$ such that 
\begin{equation}
\int_D \prod_{j=1}^k |f_j(z)|^{q_j}\,d\mu(z)\le C \prod_{j=1}^k \|f_j\|_{p_j,(n+1)(\theta_j-1)}^{q_j}\label{eq:main}
\end{equation}
for any $f_j\in A^{p_j}\bigl(D,(n+1)(\theta_j-1)\bigr)$. 
\end{theorem}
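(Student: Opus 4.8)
The plan is to prove both implications by routing them through the geometric description of skew Carleson measures in terms of Kobayashi balls, which by the results recalled in the next section (going back to \cite{ARS} and \cite{HLZ}) is available as an intermediate condition. Write $B_D(z,r)$ for the Kobayashi ball of centre $z$ and radius $r$, and set $w(z):=\mu\bigl(B_D(z,r)\bigr)/\delta(z)^{(n+1)\lambda\gamma}$. Recall that, for any small enough fixed $r>0$, the measure $\mu$ is $(\lambda,\gamma)$-skew Carleson if and only if $\sup_z w(z)<\infty$ when $\lambda\ge1$, and $\int_D w(z)^{1/(1-\lambda)}\,\delta(z)^{-(n+1)}\,d\nu(z)<\infty$ when $\lambda<1$. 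Three tools from the preliminary section will be used repeatedly: the sub-mean-value estimate $|f(z)|^s\le C_r\,\nu\bigl(B_D(z_0,2r)\bigr)^{-1}\int_{B_D(z_0,2r)}|f|^s\,d\nu$ for $z\in B_D(z_0,r)$ and $f\in\mathcal{O}(D)$; the comparabilities $\delta(z)\approx\delta(z_0)$ and $\nu\bigl(B_D(z_0,r)\bigr)\approx\delta(z_0)^{n+1}$ on a Kobayashi ball; and the existence of an $r$-lattice $\{z_m\}$ such that $\{B_D(z_m,r)\}$ covers $D$ while $\{B_D(z_m,2r)\}$ has bounded overlap.

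For the implication from the geometric condition to \eqref{eq:main}, I would split $D=\bigcup_m B_D(z_m,r)$ and apply the sub-mean-value estimate to each factor $|f_j|^{q_j}$ on $B_D(z_m,r)$, bounding the integral over that ball by a constant multiple of $\mu\bigl(B_D(z_m,r)\bigr)\prod_j\bigl(\delta(z_m)^{-(n+1)}\int_{B_D(z_m,2r)}|f_j|^{p_j}\,d\nu\bigr)^{q_j/p_j}$. Introducing the weight via $b_{j,m}:=\int_{B_D(z_m,2r)}|f_j|^{p_j}\,\delta^{(n+1)(\theta_j-1)}\,d\nu$ and using $\delta\approx\delta(z_m)$ on the ball, all powers of $\delta(z_m)$ collect into the single exponent $(n+1)\sum_j\theta_j q_j/p_j=(n+1)\lambda\gamma$, which cancels exactly against the factor defining $w(z_m)$; this cancellation is precisely why $\gamma$ must be the weighted average $\tfrac1\lambda\sum_j\theta_j q_j/p_j$. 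One is thus left with $\int_D\prod_j|f_j|^{q_j}\,d\mu\le C\sum_m w(z_m)\prod_j b_{j,m}^{q_j/p_j}$. When $\lambda\ge1$ I use $\sup_m w(z_m)<\infty$ together with the generalised Hölder inequality for sequences with conjugate exponents $\lambda p_j/q_j$ (their reciprocals sum to one since $\sum_j q_j/p_j=\lambda$), followed by the elementary bound $\sum_m b^\lambda\le(\sum_m b)^\lambda$ valid for $\lambda\ge1$; when $\lambda<1$ I instead apply Hölder with one further exponent $1/(1-\lambda)$ to absorb the sequence $\{w(z_m)\}$, which belongs to $\ell^{1/(1-\lambda)}$ exactly by the geometric hypothesis. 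In both cases bounded overlap of $\{B_D(z_m,2r)\}$ gives $\sum_m b_{j,m}\le C\|f_j\|_{p_j,(n+1)(\theta_j-1)}^{p_j}$, and \eqref{eq:main} follows.

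For the converse I would assume \eqref{eq:main} and recover the geometric condition. Fixing a centre $z_0$, I test \eqref{eq:main} against normalised bumps $f_j$ concentrated on $B_D(z_0,r)$ of size $\approx\delta(z_0)^{-(n+1)\theta_j/p_j}$ there, so that $\|f_j\|_{p_j,(n+1)(\theta_j-1)}\approx1$; the left-hand side is then bounded below by a constant times $\delta(z_0)^{-(n+1)\lambda\gamma}\mu\bigl(B_D(z_0,r)\bigr)$, which forces $\sup_z w(z)<\infty$ and disposes of the case $\lambda\ge1$. For $\lambda<1$ I would instead insert superpositions $f_j=\sum_m c_{j,m}g_{j,m}$ of such bumps, separating the scales with Rademacher signs and Khinchine's inequality as in the single-factor theory, and optimise the coefficients so as to extract the $\ell^{1/(1-\lambda)}$-summability of $\{w(z_m)\}$ by duality.

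I expect the genuine obstacle to be the regime $\lambda<1$. On the sufficiency side it forces one to use the sharp $\ell^{1/(1-\lambda)}$ form of the geometric condition and to check that it meshes with the multilinear Hölder split, while on the necessity side it requires constructing multilinear test functions whose mutual overlaps can be controlled, which is appreciably more delicate than the single-ball test available when $\lambda\ge1$. By contrast, the weight bookkeeping producing the exponent $(n+1)\lambda\gamma$ and its exact cancellation, although it is the conceptual heart that explains the definitions of $\lambda$ and $\gamma$, is a routine computation once the lattice and sub-mean-value estimates are in hand.
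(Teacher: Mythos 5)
Your proposal is correct in substance, but it follows a genuinely different route from the paper's in two of its three components, so let me compare them. For the sufficiency direction the paper never touches a lattice: it applies H\"older's inequality on $D$ with exponents $\beta_j=\lambda p_j/q_j$ (so that $\sum_j 1/\beta_j=1$), splitting $\int_D\prod_j|f_j|^{q_j}\,d\mu$ into factors $\bigl(\int_D|f_j|^{\beta_jq_j}\delta^{\beta_j\eta_j}\,d\mu\bigr)^{1/\beta_j}$ with $\eta_j=\frac{q_j}{p_j}(\theta_j-\gamma)$, and then uses Lemma~\ref{th:last} to see that each twisted measure $\delta^{\beta_j\eta_j}\mu$ is $(\lambda,\theta_j)$-skew Carleson, reducing everything to the one-function case; your lattice/sub-mean-value/discrete-H\"older argument reaches the same bound and is equally valid, at the price of invoking the lattice-sum characterizations (conditions (v)/(vi)) of Theorems~\ref{carthetaCarluno} and~\ref{carthetaCarldue} in place of Lemma~\ref{th:last}. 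For necessity with $\lambda\ge1$ you and the paper do essentially the same thing: your ``bumps of size $\approx\delta(z_0)^{-(n+1)\theta_j/p_j}$ with unit norm'' are exactly the paper's $f_{j,a}=\delta(a)^{r_j}k_a^{\sigma_j}$. The real divergence is necessity for $\lambda<1$: the paper argues by induction on $k$, freezing $g\in A^{p_k}$, passing to $\mu_k=|g|^{q_k}\mu$, converting the inductive hypothesis into the bound $\|S^{s,q_k}_{\mu,t}g\|_{1/(1-\tilde\lambda),(n+1)(\tilde\gamma-1)}\preceq C\|g\|^{q_k}_{p_k,(n+1)(\theta_k-1)}$, and then invoking the operator-theoretic Proposition~\ref{lemmatretre}, which itself rests on $T^\beta_\mu$, Corollary~\ref{th:fromStoTfin}, Proposition~\ref{th:TtoCarl} and a compact-support limit. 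Your plan (superpositions of bumps, Rademacher signs, Khinchine, duality) bypasses all of that machinery and does work, but three details must be supplied for it to close: (1) you must use $k$ \emph{independent} Rademacher parameters, one per factor, i.e.\ $f_j=\sum_m c_{j,m}r_m(t_j)f_{j,a_m}$ with $(t_1,\dots,t_k)\in[0,1]^k$, so that Fubini factors the average over $t$ and Khinchine applies to each factor separately; with a single common sequence the expectation of the product does not factor and the needed pointwise lower bound can fail (already for $k=2$: $X_1=r_1+r_2$, $X_2=r_1-r_2$ gives $X_1X_2\equiv0$ while the square functions are large); (2) the norm control $\|\sum_m c_{j,m}r_m(t_j)f_{j,a_m}\|_{p_j,(n+1)(\theta_j-1)}\preceq\|c_{j,\cdot}\|_{p_j}$ is exactly Lemma~\ref{th:fact4} and must be cited; (3) the ``optimization by duality'' is realized by the choice $c_{j,m}=e_m^{1/(\lambda p_j)}$ for finitely supported $e\ge0$, which gives $\prod_jc_{j,m}^{q_j}=e_m$ and $\prod_j\|c_{j,\cdot}\|^{q_j}_{p_j}=\bigl(\sum_m e_m^{1/\lambda}\bigr)^{\lambda}$, whence $\sum_m w(z_m)e_m\preceq C\|e\|_{\ell^{1/\lambda}}$ and so $\{w(z_m)\}\in\ell^{1/(1-\lambda)}$ by duality, which is condition (vi) of Theorem~\ref{carthetaCarldue}. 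With these points in place your argument yields a shorter, unified proof of necessity treating all $k$ at once, with no induction and no Toeplitz operators; what the paper's longer route buys is the auxiliary mapping results for $T^\beta_\mu$ and $S^{s,r}_{\mu,t}$ (Propositions~\ref{th:TtoCarl} and~\ref{lemmatretre}), which have independent interest.
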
 

This result generalizes the analogue one obtained by Pau and Zhao in \cite{PZ} on the unit ball of $\C^n$. The proof relies on the properties of two closely related operators. The first one is a Toeplitz-like operator $T^\beta_\mu$ (see \eqref{eq:2.T}), depending on a parameter $\beta\in\N^*$ and on a finite positive Borel measure $\mu$, and the main issue consists in identifying functional spaces that can act as domain and/or codomain of such an operator. The second operator, $S^{s,r}_{t,\mu}$ (see \eqref{eq:2.S}), depends on~$\mu$ and three positive real parameters $r$, $s$, $t>0$, and its norm can be used to bound the norm of the operators $T^\beta_\mu$, under suitable assumptions. In particular, the key step in the proof of the necessity implication in the case $0<\lambda<1$ consists in finding criteria for a measure to be $(\lambda,\gamma)$-skew Carleson. These criteria are expressed in terms of mapping properties of the two operators $T^\beta_\mu$ and $S^{s,r}_{t,\mu}$ in the technical Propositions~\ref{th:TtoCarl} and \ref{lemmatretre}.

\smallskip
The paper is structured as follows. In Section~2 we shall collect the preliminary results and definitions. In Section~3 we shall study the properties of the operators $T^\beta_\mu$ and $S^{s,r}_{t,\mu}$ and prove our main result.

\section{Preliminary results}

In this section we collect the precise definitions and preliminary results we shall need in the rest of the paper. 

From now on, $D\subset\subset\C^n$ will be a bounded strongly pseudoconvex domain in $\C^n$ with smooth $C^\infty$ boundary. Furthermore, we shall
use the following notations:
\begin{itemize}
\item $\delta\colon D\to\R^+$ will denote the Euclidean distance from the boundary of $D$,
that is $\delta(z)=d(z,\de D)$;
\item given two non-negative functions $f$, ~$g\colon D\to\R^+$ we shall write $f\preceq g$
to say that there is $C>0$ such that $f(z)\le C g(z)$ for all $z\in D$. The constant $C$ is 
independent of~$z\in D$, but it might depend on other parameters ($r$, $\theta$, etc.);
\item given two strictly positive functions $f$, ~$g\colon D\to\R^+$ we shall write $f\approx g$
if $f\preceq g$ and $g\preceq f$, that is
if there is $C>0$ such that $C^{-1} g(z)\le f(z)\le C g(z)$ for all $z\in D$;
\item $\nu$ will be the Lebesgue measure;
\item $\mathcal{O}(D)$ will denote the space of holomorphic functions on~$D$, endowed with the topology of uniform convergence on compact subsets;
\item given $0< p< +\infty$, the \emph{Bergman space} $A^p(D)$ is the (Banach if $p\ge 1$) space
$L^p(D)\cap\mathcal{O}(D)$, endowed with the $L^p$-norm;
\item more generally, if $\mu$ is a positive finite Borel measure on~$D$ and $0<p<+\infty$ we shall denote by $L^p(\mu)$ the set of complex-valued $\mu$-measurable functions $f\colon D\to\C$ such that
\[
\|f\|_{p,\mu}:=\left[\int_D |f(z)|^p\,d\mu(z)\right]^{1/p}<+\infty\;.
\]
If $\mu=\delta^\beta\nu$ for some $\beta\in\R$, we shall denote by $A^p(D,\beta)$ 
the \emph{weighted Bergman space}
\[
A^p(D,\beta)=L^p(\delta^\beta \nu)\cap\mathcal{O}(D)\;,
\]
and we shall write $\|\cdot\|_{p,\beta}$ instead of $\|\cdot\|_{p,\delta^\beta\nu}$;
\item $K\colon D\times D\to\C$ will be the Bergman kernel of~$D$;
\item for each $z_0\in D$ we shall denote by $k_{z_0}\colon D\to\C$ the \emph{normalized
Bergman kernel} defined by 
\[
k_{z_0}(z)=\frac{K(z,z_0)}{\sqrt{K(z_0,z_0)}}=\frac{K(z,z_0)}{\|K(\cdot,z_0)\|_2}\;;
\]
\item given $r\in(0,1)$ and $z_0\in D$, we shall denote by $B_D(z_0,r)$ the Kobayashi ball
of center~$z_0$ and radius $\frac{1}{2}\log\frac{1+r}{1-r}$.
\end{itemize}

\noindent We refer to, e.g., \cite{A,A1,JP,K}, for definitions, basic properties and applications to geometric function theory of the Kobayashi distance; and to \cite{Ho,Ho1,Kr,R} for definitions and basic properties of the Bergman kernel. 

Let us now recall a number of results we shall need on the Kobayashi geometry of strongly pseudoconvex domains. 

\begin{lemma}[\textbf{\cite[Corollary 7]{Li}, \cite[Lemma 2.1]{AS}}]
\label{sei} 
Let $D\subset\subset\C ^n$ be a bounded  strongly pseudoconvex domain, and $r\in(0,1)$. Then 
\[
\nu\bigl(B_D(\cdot,r)\bigr)\approx \delta^{n+1}\;,
\]
(where the constant depends on~$r$).
\end{lemma}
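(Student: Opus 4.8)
The plan is to reduce the statement to a boundary collar and there exploit the well-understood anisotropic shape of Kobayashi balls in the strongly pseudoconvex case. First I would dispose of the interior: on any compact set $K\subset\subset D$ the function $\delta$ is bounded between two positive constants, and the Euclidean volumes $\nu\bigl(B_D(z_0,r)\bigr)$ depend continuously on $z_0$ and are positive and finite (the balls being relatively compact in $D$), hence also bounded between two positive constants on $K$; thus $\nu\bigl(B_D(\cdot,r)\bigr)\approx\delta^{n+1}$ holds trivially there, and it remains to treat points $z_0$ with $\delta(z_0)$ small, where each $z_0$ has a unique nearest boundary point.

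The core input is the standard two-sided estimate for the infinitesimal Kobayashi metric $\kappa_D$ of a strongly pseudoconvex domain: writing a tangent vector at a point $z$ near $\partial D$ as $v=v_N+v_T$, the sum of its complex-normal and complex-tangential components relative to the nearest boundary point, one has
\[
\kappa_D(z;v)\approx\frac{|v_N|}{\delta(z)}+\frac{|v_T|}{\sqrt{\delta(z)}}\;,
\]
where the tangential term uses that the Levi form is comparable to the Euclidean norm. Integrating this bound along curves gives two consequences: first, that $\delta$ is essentially constant on a Kobayashi ball of fixed radius, i.e. $\delta(z)\approx\delta(z_0)$ for $z\in B_D(z_0,r)$ with constants depending only on $r$; and second, that $B_D(z_0,r)$ is squeezed between fixed dilates of the anisotropic region
\[
Q(z_0)=\bigl\{z:|(z-z_0)_N|\preceq\delta(z_0),\ |(z-z_0)_T|\preceq\sqrt{\delta(z_0)}\bigr\}\;,
\]
in the sense that a fixed contraction of $Q(z_0)$ is contained in $B_D(z_0,r)$, which in turn is contained in a fixed dilate of $Q(z_0)$.

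Granting this box description, the conclusion is a volume count: the single complex normal direction of size $\delta(z_0)$ contributes a factor $\delta(z_0)^2$ to the Euclidean volume, while the $n-1$ complex tangential directions, each of size $\sqrt{\delta(z_0)}$, contribute $\bigl(\sqrt{\delta(z_0)}\bigr)^{2(n-1)}=\delta(z_0)^{n-1}$, so that $\nu\bigl(Q(z_0)\bigr)\approx\delta(z_0)^{n+1}$ and the two-sided inclusion transfers this estimate to $\nu\bigl(B_D(z_0,r)\bigr)$. The main obstacle is exactly the passage from the pointwise metric estimate to the global two-sided shape of the ball, and in particular the outer inclusion of $B_D(z_0,r)$ in a fixed dilate of $Q(z_0)$: this requires integrating the lower bound on $\kappa_D$ along arbitrary competing curves while controlling the variation of $\delta$ along them, and it is here that one invokes the localization of the Kobayashi metric near $\partial D$ together with the fact that, close to each boundary point, the domain is biholomorphically comparable to a strictly convex model whose Kobayashi balls can be computed directly. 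Once this is established, the volume computation and the interior reduction are routine.
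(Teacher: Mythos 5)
This lemma is not proved in the paper at all: it is imported verbatim from \cite[Corollary~7]{Li} and \cite[Lemma~2.1]{AS}, and your sketch reconstructs essentially the argument behind those sources --- Graham's two-sided estimate $\kappa_D(z;v)\approx |v_N|/\delta(z)+|v_T|/\delta(z)^{1/2}$ for the infinitesimal Kobayashi metric, the resulting comparability of $\delta$ on Kobayashi balls (which is the paper's Lemma~\ref{sette}), the squeezing of $B_D(z_0,r)$ between fixed dilates of the anisotropic box of normal size $\delta(z_0)$ and tangential size $\sqrt{\delta(z_0)}$, and the volume count $\delta(z_0)^2\cdot\delta(z_0)^{n-1}=\delta(z_0)^{n+1}$. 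You also correctly single out the one delicate point, namely the outer inclusion of the Kobayashi ball in a dilated box (naive integration of the lower metric bound fails because curves can dip inward where tangential motion is cheap), and you resolve it the same way the cited proofs do --- localization of the Kobayashi distance near a strongly pseudoconvex boundary point together with comparison to a convex (ellipsoidal) model --- so the proposal is correct and follows essentially the same route as the paper's sources.
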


\begin{lemma}[\textbf{\cite[Lemma 2.2]{AS}}]
\label{sette} 
Let $D\subset\subset\C ^n$ be a bounded  strongly pseudoconvex domain. Then there is $C>0$ such that
\[
 \frac{1-r}{C}\delta(z_0)\le  \delta(z) \le \frac{C}{1-r}\delta(z_0)
\]
for all $r\in(0,1)$, $z_0\in D$ and $z\in B_D(z_0,r)$.
\end{lemma}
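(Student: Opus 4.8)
The plan is to translate the statement into one about the Kobayashi distance $k_D$ and then to invoke a sharp comparison between that distance and $\log\delta$ for strongly pseudoconvex domains. By definition of the Kobayashi ball, $z\in B_D(z_0,r)$ means exactly
\[
k_D(z_0,z)<\tfrac12\log\frac{1+r}{1-r}\;.
\]
Hence the whole statement is equivalent to the following uniform lower estimate: there is a constant $C_0>0$, independent of all points, such that
\[
k_D(z_0,z)\ge\tfrac12\left|\log\frac{\delta(z_0)}{\delta(z)}\right|-C_0
\]
for every $z_0,z\in D$. Indeed, granting this bound, if $z\in B_D(z_0,r)$ then $\tfrac12\bigl|\log(\delta(z_0)/\delta(z))\bigr|<\tfrac12\log\frac{1+r}{1-r}+C_0$, so both $\delta(z_0)/\delta(z)$ and $\delta(z)/\delta(z_0)$ are at most $\frac{1+r}{1-r}e^{2C_0}\le\frac{2e^{2C_0}}{1-r}$; taking $C=2e^{2C_0}$ gives the two desired inequalities simultaneously. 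Note that obtaining the first power of $\frac{1}{1-r}$ forces the coefficient $\tfrac12$ in the lower bound to be sharp, which is exactly what singles out the half-plane as the correct model.

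The core is therefore the sharp lower estimate for $k_D$. I would establish it by comparison with the model half-plane $\mathbb{H}=\{\zeta\in\C:\operatorname{Im}\zeta>0\}$, whose Kobayashi distance is explicit: on the imaginary axis $k_{\mathbb{H}}(is,it)=\tfrac12|\log(s/t)|$, and in general $k_{\mathbb{H}}(a,b)\ge\tfrac12\bigl|\log(\operatorname{Im}a/\operatorname{Im}b)\bigr|$ (this follows at once from the identity $\cosh\bigl(2k_{\mathbb{H}}(a,b)\bigr)=1+\frac{|a-b|^2}{2\operatorname{Im}a\,\operatorname{Im}b}$ together with $|a-b|\ge|\operatorname{Im}a-\operatorname{Im}b|$). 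Since $D$ is strongly pseudoconvex with smooth boundary, near each boundary point $p$ there is a local holomorphic map $\phi_p\colon D\to\mathbb{H}$, built from a defining function or a local holomorphic peak function, whose imaginary part satisfies $\operatorname{Im}\phi_p\approx\delta$ close to $p$; because holomorphic maps are Kobayashi-distance decreasing, $k_D(z_0,z)\ge k_{\mathbb{H}}\bigl(\phi_p(z_0),\phi_p(z)\bigr)$, and the half-plane bound transplants to $D$. Equivalently, one may integrate Graham's infinitesimal estimate, which for a strongly pseudoconvex domain gives $|\de\delta(z)\cdot v|/\delta(z)\preceq\kappa_D(z;v)$ in the complex normal direction, along curves joining $z_0$ to $z$; since $|\nabla\delta|=1$ near $\partial D$ this yields $\tfrac12\bigl|\log(\delta(z_0)/\delta(z))\bigr|\preceq k_D(z_0,z)$ with the correct normalisation.

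Finally I would assemble the global estimate. Away from the boundary, on any compact subset $K\subset D$ the function $\delta$ is bounded above and below and $k_D$ is comparable to the Euclidean distance, so the inequality holds trivially after enlarging $C_0$; near $\partial D$ the local comparisons above apply, and splitting a connecting curve into its near-boundary and interior portions — the latter contributing only a bounded variation of $\log\delta$ — merges the pieces into a single bound with one additive constant. The main obstacle I anticipate is precisely this uniformity: one must secure the lower bound with a single $C_0$ valid for all pairs $z_0,z$, not merely for $z$ tending to the boundary with $z_0$ fixed, and with the sharp coefficient $\tfrac12$. This is where strong pseudoconvexity and the $C^\infty$ smoothness of $\partial D$ are essential, since they guarantee both the local model maps $\phi_p$ with $\operatorname{Im}\phi_p\approx\delta$ uniformly along the boundary and the curvature control behind Graham's estimate.
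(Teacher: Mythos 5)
Your reduction of the lemma to the uniform sharp estimate $k_D(z_0,z)\ge\frac12\bigl|\log(\delta(z_0)/\delta(z))\bigr|-C_0$, followed by exponentiation against the radius $\frac12\log\frac{1+r}{1-r}$ of the Kobayashi ball, is exactly the argument of the cited source \cite[Lemma 2.2]{AS} (the present paper only quotes the result), where that distance estimate is taken from the known boundary estimates for the Kobayashi distance on strongly pseudoconvex domains, proved precisely via half-plane comparison through peak-type maps as in your second paragraph. One small caution: the alternative route you mention, integrating Graham's infinitesimal estimate along curves, as sketched only yields the inequality with a multiplicative loss in the coefficient $\frac12$ (hence a power of $\frac{1}{1-r}$ strictly larger than one), so the peak-function/localization argument is the one that actually delivers the additive constant $C_0$ you correctly identified as essential.
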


We shall also need the existence of suitable coverings by Kobayashi balls:

\begin{definition}
\label{def:lattice}
Let $D\subset\subset\C^n$ be a bounded domain, and $r>0$. A \emph{$r$-lattice} in~$D$
is a sequence $\{a_k\}\subset D$ such that $D=\bigcup_{k} B_D(a_k,r)$ and 
there exists $m>0$ such that any point in~$D$ belongs to at most $m$ balls of the
form~$B_D(a_k,R)$, where $R=\frac{1}{2}(1+r)$.
\end{definition}

The existence of $r$-lattices in bounded strongly pseudoconvex domains is ensured by
the following result: 

\begin{lemma}[\textbf{\cite[Lemma 2.5]{AS}}]
\label{uno}
Let $D\subset\subset\C ^n$ be a bounded strongly pseudoconvex domain. Then for every $r\in(0,1)$ there exists an $r$-lattice in~$D$, that is there exist
 $m\in{\bf N}$ and a sequence 
$\{a_k\}\subset D$ of points such that
$D=\bigcup_{k=0}^\infty B_D(a_k,r)$
and no point of $D$ belongs to more than $m$ of the balls $B_D(a_k,R)$,
where $R={\frac12}(1+r)$.
\end{lemma}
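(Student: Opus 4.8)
The plan is to produce $\{a_k\}$ as a maximal set of points that are pairwise separated in the Kobayashi distance $k_D$, and then to read off both the covering property and the bounded-overlap property from maximality together with the volume estimates of Lemmas~\ref{sei} and~\ref{sette}. Recall that $B_D(z,r)$ is exactly the $k_D$-ball of center $z$ and radius $\rho:=\frac12\log\frac{1+r}{1-r}$, while $B_D(z,R)$ with $R=\frac12(1+r)$ is the $k_D$-ball of the strictly larger radius $\rho_R:=\frac12\log\frac{1+R}{1-R}$, since $R>r$ for $r<1$.

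First I would construct the lattice. Consider the family of all $\rho$-separated subsets of $D$, that is subsets $S$ with $k_D(a,a')\ge\rho$ for every pair of distinct $a,a'\in S$, partially ordered by inclusion. The union of a chain of such sets is again $\rho$-separated, so Zorn's lemma yields a maximal $\rho$-separated set, which I relabel as $\{a_k\}$. Since the Kobayashi distance induces the Euclidean topology on the bounded domain $D$ and $D$ is separable, a $\rho$-separated set is necessarily countable (the $k_D$-balls of radius $\rho/2$ about its points are disjoint and nonempty), so the indexing $k=0,1,2,\dots$ is legitimate. Maximality immediately gives the covering: for $z\in D$ not already in $\{a_k\}$, the set $\{a_k\}\cup\{z\}$ fails to be $\rho$-separated, and since $\{a_k\}$ itself is $\rho$-separated the violation must involve $z$, so $k_D(z,a_k)<\rho$ for some $k$, i.e. $z\in B_D(a_k,r)$; the points $a_k$ trivially lie in their own balls. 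Hence $D=\bigcup_k B_D(a_k,r)$.

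The main step is the bounded multiplicity. The key observation is that, the $a_k$ being $\rho$-separated, the smaller Kobayashi balls $B_D(a_k,s)$ of radius $\rho/2$ (that is, $s=\tanh(\rho/2)$) are pairwise disjoint by the triangle inequality. Now fix $z\in D$ and suppose $z\in B_D(a_{k_i},R)$ for $i=1,\dots,N$; I must bound $N$ independently of $z$. For each such $i$ and each $w\in B_D(a_{k_i},s)$ one has $k_D(z,w)\le k_D(z,a_{k_i})+k_D(a_{k_i},w)<\rho_R+\rho/2$, so all the disjoint balls $B_D(a_{k_i},s)$ lie inside the single ball $B_D(z,t)$, where $t=\tanh(\rho_R+\rho/2)\in(0,1)$. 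Comparing volumes,
\[
\sum_{i=1}^N \nu\bigl(B_D(a_{k_i},s)\bigr)\le \nu\bigl(B_D(z,t)\bigr)\;.
\]
Since $z\in B_D(a_{k_i},R)$ forces $a_{k_i}\in B_D(z,R)$ by symmetry of $k_D$, Lemma~\ref{sette} gives $\delta(a_{k_i})\approx\delta(z)$ with constants depending only on $R$; combined with Lemma~\ref{sei} this yields $\nu\bigl(B_D(a_{k_i},s)\bigr)\approx\delta(a_{k_i})^{n+1}\approx\delta(z)^{n+1}$ and $\nu\bigl(B_D(z,t)\bigr)\approx\delta(z)^{n+1}$, all implied constants depending only on $r$. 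Substituting, the factors $\delta(z)^{n+1}$ cancel and one obtains $N\le m$ for some $m$ depending only on $r$, which is exactly the required finite-overlap bound. I expect this volume-packing estimate — specifically the uniform comparison of $\delta$ on the enlarged balls via Lemma~\ref{sette} — to be the delicate point, since the whole argument hinges on the implied constants being independent of the basepoint $z$.
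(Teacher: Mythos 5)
Your proof is correct and coincides in substance with the proof of record: the paper does not prove this lemma itself but quotes it from \cite[Lemma~2.5]{AS}, where the lattice is likewise obtained as a maximal Kobayashi-separated sequence, the covering property read off from maximality, and the bounded overlap derived from exactly the volume-packing comparison you use, namely disjointness of the half-radius balls together with $\nu\bigl(B_D(\cdot,r)\bigr)\approx\delta^{n+1}$ and the uniform comparability of $\delta$ on Kobayashi balls (Lemmas~\ref{sei} and~\ref{sette}). Your handling of the delicate point --- that all implied constants depend only on $r$ and not on the basepoint --- is the right one, and nothing further is needed.
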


We shall use a submean estimate for nonnegative plurisubharmonic functions on Kobayashi balls:

\begin{lemma}[\textbf{\cite[Corollary 2.8]{AS}}] 
 \label{due}
 Let $D\subset\subset\C ^n$ be a bounded strongly pseudoconvex domain. Given $r\in(0,1)$, set $R={\frac12}(1+r)\in(0,1)$. Then there exists a constant $K_r>0$ depending on~$r$ such that
\[
\forall{z_0\in D\;\forall z\in B_D(z_0,r)}\ \ \ \ \chi(z)\le 
{\frac{K_r}{\nu\left(B_D(z_0,r)\right)}}\int_{B_D(z_0,R)}\chi\,d\nu
\]
for every nonnegative plurisubharmonic function $\chi\colon D\to\R^+$.
\end{lemma}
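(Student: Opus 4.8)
The plan is to show that the validity of \eqref{eq:main} depends only on the two numbers $\lambda$ and $\gamma$, never on the integer $k$ nor on the particular splitting. Since the case $k=1$ of \eqref{eq:main} (with $p_1=p$, $q_1=q$, $\theta_1=\gamma$) is by definition the assertion that $\mu$ is $(p,q;(n+1)(\gamma-1))$-skew Carleson, i.e. $(\lambda,\gamma)$-skew Carleson, the theorem will follow once \eqref{eq:main} is shown to be equivalent, for every $k$, to a single geometric condition $(G)$ on $\mu$ expressed through $\lambda$ and $\gamma$ alone. Throughout I write $\alpha_j=(n+1)(\theta_j-1)$, so that $\theta_j>1-\frac1{n+1}$ reads $\alpha_j>-1$, and I record the arithmetic identity $\sum_{j=1}^k(n+1)\theta_j\frac{q_j}{p_j}=(n+1)\lambda\gamma$, immediate from the definitions of $\lambda$ and $\gamma$; this is exactly what will force the powers of $\delta$ to cancel. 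I fix an $r$-lattice $\{a_\ell\}$ as in Lemma \ref{uno}, put $R=\frac12(1+r)$, and for $g\in A^{p}\bigl(D,(n+1)(\theta-1)\bigr)$ abbreviate $I(g,a)=\int_{B_D(a,R)}|g|^{p}\delta^{(n+1)(\theta-1)}\,d\nu$.

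The analytic engine is the pointwise estimate
\[
\sup_{B_D(a,r)}|g|^{p}\preceq\delta(a)^{-(n+1)\theta}\,I(g,a),
\]
obtained by applying the submean inequality of Lemma \ref{due} to the plurisubharmonic function $|g|^{p}$, then using Lemma \ref{sette} to replace $\delta$ by $\delta(a)$ on the ball and Lemma \ref{sei} to replace $\nu\bigl(B_D(a,r)\bigr)$ by $\delta(a)^{n+1}$. Raising to the power $q_j/p_j$ and multiplying over $j$, a point $z\in B_D(a_\ell,r)$ satisfies $\prod_j|f_j(z)|^{q_j}\preceq\delta(a_\ell)^{-(n+1)\lambda\gamma}\prod_j I(f_j,a_\ell)^{q_j/p_j}$ by the arithmetic identity. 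Covering $D$ by the lattice balls and integrating against $\mu$ then gives
\[
\int_D\prod_j|f_j|^{q_j}\,d\mu\preceq\sum_\ell b_\ell\prod_j I(f_j,a_\ell)^{q_j/p_j},\qquad b_\ell:=\frac{\mu\bigl(B_D(a_\ell,r)\bigr)}{\delta(a_\ell)^{(n+1)\lambda\gamma}},
\]
reducing \eqref{eq:main} to a summability statement for $\{b_\ell\}$. The correct condition $(G)$ will be that $\{b_\ell\}\in\ell^{1/(1-\lambda)}$ when $0<\lambda<1$ and $\{b_\ell\}\in\ell^{\infty}$ when $\lambda\ge1$.

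For the implication $(G)\Rightarrow\eqref{eq:main}$ I would bound the sum above. When $\lambda\ge1$ the hypothesis $b_\ell\preceq1$ together with the elementary inequality $\sum_\ell\prod_j x_{j,\ell}^{q_j/p_j}\le\prod_j\bigl(\sum_\ell x_{j,\ell}\bigr)^{q_j/p_j}$ (valid for nonnegative $x_{j,\ell}$ since $\sum_j q_j/p_j=\lambda\ge1$, via weighted arithmetic--geometric mean after normalization) reduces matters to $\prod_j\bigl(\sum_\ell I(f_j,a_\ell)\bigr)^{q_j/p_j}$. When $0<\lambda<1$ I instead apply H\"older with conjugate exponents $\frac1{1-\lambda}$ and $\{p_j/q_j\}_j$ (their reciprocals sum to $(1-\lambda)+\sum_j q_j/p_j=1$, and each $p_j/q_j\ge1$ because $q_j/p_j<\lambda<1$), obtaining $\bigl(\sum_\ell b_\ell^{1/(1-\lambda)}\bigr)^{1-\lambda}\prod_j\bigl(\sum_\ell I(f_j,a_\ell)\bigr)^{q_j/p_j}$. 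In both regimes the finite-overlap property of the lattice (Definition \ref{def:lattice}, Lemma \ref{uno}) gives $\sum_\ell I(f_j,a_\ell)\preceq\|f_j\|_{p_j,\alpha_j}^{p_j}$, and \eqref{eq:main} follows. This direction is uniform in $k$ and, specialized to $k=1$, already proves that $(G)$ implies the $(\lambda,\gamma)$-skew Carleson property.

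The reverse implication $\eqref{eq:main}\Rightarrow(G)$ is where the real work lies. Testing \eqref{eq:main} with the extremal Bergman-kernel functions $f_j$, normalized in $A^{p_j}(D,\alpha_j)$ and peaking on one Kobayashi ball $B_D(a,r)$ (so that $|f_j|\approx\delta(a)^{-(n+1)\theta_j/p_j}$ there, a construction provided by the standard off- and on-diagonal estimates for the Bergman kernel of a strongly pseudoconvex domain), instantly yields $b_\ell\preceq1$, settling the case $\lambda\ge1$. For $0<\lambda<1$ boundedness of $\{b_\ell\}$ is not enough: one must produce the full $\ell^{1/(1-\lambda)}$ summability, and this is the main obstacle. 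The idea is to superpose the extremal kernels over many lattice points with random signs and invoke Khinchine's inequality to linearize $\int_D\prod_j|f_j|^{q_j}\,d\mu$ into a sum that, optimized against \eqref{eq:main}, forces $\sum_\ell b_\ell^{1/(1-\lambda)}<\infty$ (the exponent $1/(1-\lambda)$ being precisely the one dual to the H\"older step above). Executing this in a general strongly pseudoconvex domain, rather than the unit ball of \cite{PZ} where the kernels are explicit, is delicate; this is exactly where I would route the argument through the Toeplitz-type operator $T^\beta_\mu$ and the auxiliary operator $S^{s,r}_{t,\mu}$ of Section 3, showing that the $(\lambda,\gamma)$-skew Carleson property, the boundedness of these operators between the appropriate weighted spaces, and condition $(G)$ are all equivalent, so that the random-sign construction is replaced by operator-norm estimates resting on Lemmas \ref{sei}--\ref{due}. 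Assembling the two implications in each regime of $\lambda$ completes the proof of Theorem \ref{maintheorem}.
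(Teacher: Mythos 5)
You have proved the wrong statement. The statement under review is Lemma~\ref{due}, the submean-value estimate $\chi(z)\preceq \nu\bigl(B_D(z_0,r)\bigr)^{-1}\int_{B_D(z_0,R)}\chi\,d\nu$ for nonnegative plurisubharmonic $\chi$ on Kobayashi balls; your proposal instead sketches a proof of Theorem~\ref{maintheorem}. Worse than being merely off-target, your argument is circular with respect to the actual statement: you declare that ``the analytic engine'' of your scheme is the pointwise bound $\sup_{B_D(a,r)}|g|^p\preceq \delta(a)^{-(n+1)\theta}I(g,a)$, \emph{obtained by applying the submean inequality of Lemma~\ref{due}} to $|g|^p$. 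So the lemma you were asked to establish appears as an unproved ingredient of your own text, and nothing in the proposal addresses why such a submean inequality should hold on Kobayashi balls with a constant depending only on $r$ and with the enlarged ball $B_D(z_0,R)$, $R=\frac12(1+r)$, on the right-hand side.

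For the record, the paper itself does not reprove this lemma: it is quoted from Abate--Saracco \cite[Corollary~2.8]{AS}. The proof there is elementary and quite different in nature from anything in your sketch: for $z\in B_D(z_0,r)$ one shows, using estimates relating the Kobayashi distance to the Euclidean geometry of a strongly pseudoconvex domain (in the spirit of Lemma~\ref{sette}), that there is a Euclidean ball centered at $z$ of radius comparable to $\delta(z_0)$ entirely contained in the larger Kobayashi ball $B_D(z_0,R)$; the classical Euclidean mean-value inequality for the plurisubharmonic function $\chi$ on that ball, combined with the volume estimate $\nu\bigl(B_D(z_0,r)\bigr)\approx\delta(z_0)^{n+1}$ of Lemma~\ref{sei}, then yields exactly the stated inequality with a constant $K_r$ depending only on $r$. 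None of the machinery you deploy (lattices, H\"older with exponents $1/(1-\lambda)$, Khinchine's inequality, the operators $T^\beta_\mu$ and $S^{s,r}_{t,\mu}$) is relevant here, and all of it logically sits \emph{downstream} of Lemma~\ref{due}, not upstream.
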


We shall also need a few estimates on the behavior of the Bergman kernel. The first one is classical (see, e.g., \cite{Ho1}):

\begin{lemma}
\label{BKbasic}
Let $D\subset\subset\C ^n$ be a bounded strongly
pseudoconvex domain. Then
\[
\|K(\cdot,z_0)\|_2=\sqrt{K(z_0,z_0)}\approx \delta(z_0)^{-(n+1)/2}\qquad
\hbox{and}\qquad \|k_{z_0}\|_2\equiv 1
\]
for all $z_0\in D$.
\end{lemma}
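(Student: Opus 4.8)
The plan is to treat the three claims separately, deriving the two equalities from the reproducing property of the Bergman kernel and then reducing the comparison $\sqrt{K(z_0,z_0)}\approx\delta(z_0)^{-(n+1)/2}$ to the two-sided estimate $K(z_0,z_0)\approx\delta(z_0)^{-(n+1)}$, whose upper half is elementary and whose lower half carries all the content. First I would record the Hilbert-space facts. Since $A^2(D)$ is a reproducing kernel Hilbert space with kernel $K$, one has $f(z_0)=\langle f,K(\cdot,z_0)\rangle$ for every $f\in A^2(D)$. Taking $f=K(\cdot,z_0)$ gives $K(z_0,z_0)=\langle K(\cdot,z_0),K(\cdot,z_0)\rangle=\|K(\cdot,z_0)\|_2^2$; in particular $K(z_0,z_0)\ge0$ and $\|K(\cdot,z_0)\|_2=\sqrt{K(z_0,z_0)}$. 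The normalization $\|k_{z_0}\|_2\equiv1$ is then immediate from $k_{z_0}=K(\cdot,z_0)/\|K(\cdot,z_0)\|_2$. Cauchy--Schwarz applied to the reproducing identity also yields the extremal description
\[
K(z_0,z_0)=\sup\bigl\{|f(z_0)|^2 : f\in A^2(D),\ \|f\|_2\le1\bigr\}\;,
\]
attained at $f=k_{z_0}$, since $|k_{z_0}(z_0)|^2=K(z_0,z_0)$. It thus remains to prove $K(z_0,z_0)\approx\delta(z_0)^{-(n+1)}$.

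For the upper estimate I would apply the submean inequality of Lemma~\ref{due} to the nonnegative plurisubharmonic function $\chi=|k_{z_0}|^2$. Fixing $r\in(0,1)$ and evaluating at the centre $z_0\in B_D(z_0,r)$,
\[
K(z_0,z_0)=\chi(z_0)\le\frac{K_r}{\nu\bigl(B_D(z_0,r)\bigr)}\int_{B_D(z_0,R)}|k_{z_0}|^2\,d\nu\le\frac{K_r}{\nu\bigl(B_D(z_0,r)\bigr)}\,\|k_{z_0}\|_2^2=\frac{K_r}{\nu\bigl(B_D(z_0,r)\bigr)}\;,
\]
and Lemma~\ref{sei} gives $\nu\bigl(B_D(z_0,r)\bigr)\approx\delta(z_0)^{n+1}$, whence $K(z_0,z_0)\preceq\delta(z_0)^{-(n+1)}$. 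This half uses nothing beyond the two geometric lemmas already established: it is exactly the statement that a point value of $|k_{z_0}|^2$ is controlled by its average over a Kobayashi ball of volume $\approx\delta^{n+1}$.

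The reverse estimate $\delta(z_0)^{-(n+1)}\preceq K(z_0,z_0)$ is the main obstacle, and here the full strength of strong pseudoconvexity (not merely the metric geometry of the lemmas above) is needed, since the submean property only bounds point values from above. One must exhibit, for each $z_0$ near $\partial D$, a test function $g\in A^2(D)$ essentially concentrated on $B_D(z_0,r)$, so that $|g(z_0)|^2/\|g\|_2^2\succeq\nu\bigl(B_D(z_0,r)\bigr)^{-1}\approx\delta(z_0)^{-(n+1)}$; feeding such a $g$ into the extremal description closes the estimate. I would construct $g$ by H\"ormander's weighted $L^2$ method: letting $\zeta_0\in\partial D$ be a nearest boundary point to $z_0$ and $\Phi(\cdot,\zeta_0)$ the associated Levi polynomial / local holomorphic support function, which by strong pseudoconvexity satisfies $\mathrm{Re}\,\Phi(z,\zeta_0)\succeq\delta(z)+|z-\zeta_0|^2$ near $\zeta_0$, one takes a large power $g_0=\Phi(\cdot,\zeta_0)^{-N}$, localizes it by a cut-off $\psi$ supported near $\zeta_0$, and corrects the resulting $\bar\de$-error $(\bar\de\psi)\,g_0$ — which is supported away from $z_0$ — by solving $\bar\de u=(\bar\de\psi)\,g_0$ against a weight singular at $z_0$, forcing $u(z_0)=0$ and $\|u\|_2$ small.

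Then $g=\psi\,g_0-u$ is holomorphic, satisfies $|g(z_0)|\approx\delta(z_0)^{-N}$ (because $\mathrm{Re}\,\Phi(z_0,\zeta_0)\approx\delta(z_0)$ and $u(z_0)=0$), and has its $L^2$-mass concentrated at scale $\delta(z_0)$, giving $\|g\|_2^2\preceq\delta(z_0)^{-2N+(n+1)}$ and hence $|g(z_0)|^2/\|g\|_2^2\succeq\delta(z_0)^{-(n+1)}$, as required. Since this estimate is classical, I would alternatively simply invoke the diagonal asymptotics of the Bergman kernel of a strongly pseudoconvex domain as in \cite{Ho1}, from which $K(z_0,z_0)\approx\delta(z_0)^{-(n+1)}$ follows at once; the construction just sketched is precisely how that input is proved. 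Combining the two halves yields $\sqrt{K(z_0,z_0)}\approx\delta(z_0)^{-(n+1)/2}$, completing the lemma.
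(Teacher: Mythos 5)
Your proposal is correct, but it is worth noting that the paper does not actually prove this lemma: it is stated as classical, with a bare pointer to H\"ormander \cite{Ho1} --- which is precisely the alternative you offer in your last sentence. What you supply is a genuine proof. The Hilbert-space identities (reproducing property, $\|K(\cdot,z_0)\|_2^2=K(z_0,z_0)$, the extremal characterization of $K(z_0,z_0)$ attained at $k_{z_0}$) are exactly right, and your upper bound $K(z_0,z_0)\preceq\delta(z_0)^{-(n+1)}$, obtained by applying Lemma~\ref{due} to the plurisubharmonic function $\chi=|k_{z_0}|^2$ at the centre and invoking Lemma~\ref{sei}, is sound and has the merit of using only tools the paper has already stated. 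Your sketch of the lower bound is the standard H\"ormander construction: with $g_0=\Phi(\cdot,\zeta_0)^{-N}$ one gets $\|\psi g_0\|_2^2\preceq\delta(z_0)^{-2N+(n+1)}$ (the integration of $(\delta(z_0)+\delta+|{\rm Im}\,\Phi|+|w|^2)^{-2N}$ loses one power each in $\delta$ and ${\rm Im}\,\Phi$ and $n-1$ powers in the tangential variables, so one needs $2N>n+1$), and the $\bar\de$-correction against a weight $\sim(n+\eps)\log|z-z_0|^2$ forces $u(z_0)=0$ with $\|u\|_2=O(1)$, which is negligible against $\delta(z_0)^{-N+(n+1)/2}$ once $N>(n+1)/2$ and $\delta(z_0)$ is small. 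Two small points you should make explicit: the construction only works near $\partial D$, so for $z_0$ in the complementary compact region you should test the extremal description with $f\equiv\nu(D)^{-1/2}$ to get $K(z_0,z_0)\ge\nu(D)^{-1}>0$ while $\delta^{-(n+1)}$ is bounded there, completing the two-sided bound on all of $D$; and, if one is willing to treat the paper's quoted results as black boxes, Lemma~\ref{piu} evaluated at $z=z_0$ already gives $K(z_0,z_0)=|k_{z_0}(z_0)|^2\approx\delta(z_0)^{-(n+1)}$ whenever $\delta(z_0)<\delta_r$, so the whole lemma follows in a few lines --- at the cost of hiding the H\"ormander input inside the proof of \cite{Li} rather than displaying it, as you do.
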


A similar estimate but with constants uniform on Kobayashi balls is the following:

\begin{lemma}[\textbf{\cite[Theorem~12]{Li}, \cite[Lemma~3.2 and Corollary 3.3]{AS}}]  
\label{piu}
Let $D\subset\subset\C ^n$ be a bounded strongly
pseudoconvex domain. Then for every $r\in(0,1)$ there exist $c_r>0$ and 
$\delta_r>0$ such that 
if $z_0\in D$ satisfies $\delta(z_0)<\delta_r$ then
\[
\frac{c_r}{\delta(z_0)^{n+1}}\le |K(z,z_0)|\le \frac{1}{c_r\delta(z_0)^{n+1}}
\]
and
\[
\frac{c_r}{\delta(z_0)^{n+1}}\le |k_{z_0}(z)|^2\le \frac{1}{c_r\delta(z_0)^{n+1}}
\]
for all $z\in B_D(z_0,r)$.  
\end{lemma}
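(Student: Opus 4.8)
The plan is to prove the two displayed chains simultaneously by reducing everything to a two-sided bound on $|k_{z_0}(z)|^2$. Since $|K(z,z_0)|=|k_{z_0}(z)|\sqrt{K(z_0,z_0)}$ and Lemma~\ref{BKbasic} gives $\sqrt{K(z_0,z_0)}\approx\delta(z_0)^{-(n+1)/2}$, any estimate of the form $c_r\,\delta(z_0)^{-(n+1)}\le|k_{z_0}(z)|^2\le c_r^{-1}\delta(z_0)^{-(n+1)}$ immediately yields the corresponding estimate for $|K(z,z_0)|$ (with a new constant), so I would concentrate on $|k_{z_0}(z)|^2$. The device I would use is the reproducing identity: for all $z,z_0\in D$ one has $k_{z_0}(z)=\langle k_{z_0},K(\cdot,z)\rangle=\sqrt{K(z,z)}\,\langle k_{z_0},k_z\rangle$, whence
\[
|k_{z_0}(z)|^2=K(z,z)\,|\langle k_{z_0},k_z\rangle|^2 .
\]
By Lemma~\ref{BKbasic} we have $K(z,z)\approx\delta(z)^{-(n+1)}$, and by Lemma~\ref{sette} $\delta(z)\approx\delta(z_0)$ for $z\in B_D(z_0,r)$; hence $K(z,z)\approx\delta(z_0)^{-(n+1)}$ on the Kobayashi ball. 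The whole statement thus reduces to the two-sided estimate $c_r\le|\langle k_{z_0},k_z\rangle|\le 1$ for $z\in B_D(z_0,r)$ and $\delta(z_0)<\delta_r$.

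The upper bound is then free: since $\|k_{z_0}\|_2=\|k_z\|_2=1$ by Lemma~\ref{BKbasic}, Cauchy--Schwarz gives $|\langle k_{z_0},k_z\rangle|\le1$, so $|k_{z_0}(z)|^2\preceq\delta(z_0)^{-(n+1)}$. (Alternatively one may bypass the identity and bound $|k_{z_0}|^2$ directly, as it is nonnegative and plurisubharmonic: the submean estimate of Lemma~\ref{due} together with $\|k_{z_0}\|_2=1$ gives $|k_{z_0}(z)|^2\le K_r\,\nu(B_D(z_0,r))^{-1}\approx\delta(z_0)^{-(n+1)}$ via Lemma~\ref{sei}.) Either way this delivers both upper inequalities of the Lemma.

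The lower bound is the real content, and the main obstacle. It amounts to showing that the normalized kernels at Kobayashi-nearby points are not nearly orthogonal, uniformly as $z_0\to\partial D$: concretely $|\langle k_{z_0},k_z\rangle|\ge c_r$ whenever the Kobayashi distance between $z$ and $z_0$ does not exceed $\frac12\log\frac{1+r}{1-r}$. Writing $1-|\langle k_{z_0},k_z\rangle|=\tfrac12\inf_{\theta}\|k_{z_0}-e^{i\theta}k_z\|_2^2$, this is equivalent to saying that $k_z$ stays $L^2$-close to $k_{z_0}$ (up to a unimodular factor) on a Kobayashi ball. For small radii one expects this from the continuity of $z\mapsto k_z$ and the value $\langle k_{z_0},k_{z_0}\rangle=1$ at the centre; the difficulty is the uniformity near the boundary and the fact that it must hold for every $r<1$, i.e. for bounded but not necessarily small Kobayashi distance.

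The naive route — controlling the oscillation of $k_{z_0}$ by a Euclidean Cauchy estimate on its gradient — fails here, because Kobayashi balls are strongly anisotropic in Euclidean terms (of size $\approx\delta(z_0)$ in the complex normal direction but only $\approx\delta(z_0)^{1/2}$ tangentially), so the crude gradient bound overestimates the oscillation by a factor blowing up as $\delta(z_0)\to0$. The way around this is the boundary-localization and scaling analysis of the Bergman kernel of Li~\cite{Li} (the cited Theorem~12): after rescaling $D$ near the boundary point closest to $z_0$ one compares $K$ with the explicit kernel of a model (the ball, or a Siegel domain), for which the normalized kernel and the quantity $|\langle k_{z_0},k_z\rangle|$ can be computed and bounded below on Kobayashi balls; transferring the estimate back, together with $\delta(z)\approx\delta(z_0)$ from Lemma~\ref{sette}, yields $|\langle k_{z_0},k_z\rangle|\ge c_r$ uniformly for $\delta(z_0)<\delta_r$. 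Combining this lower bound with the identity above gives $|k_{z_0}(z)|^2\ge c_r\,\delta(z_0)^{-(n+1)}$, and hence, through $|K(z,z_0)|=|k_{z_0}(z)|\sqrt{K(z_0,z_0)}$ and Lemma~\ref{BKbasic}, all four inequalities of the Lemma.
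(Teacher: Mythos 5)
Your proposal is correct in substance, but it is worth being explicit about what the paper itself does here: Lemma~\ref{piu} is a \emph{quoted} result, with no proof given in the text --- the authors simply cite \cite[Theorem~12]{Li} and \cite[Lemma~3.2, Corollary~3.3]{AS}, and Li's Theorem~12 is essentially the two-sided estimate on $|K(z,z_0)|$ itself. Your refactoring differs in a useful way on one half: you prove both \emph{upper} bounds completely from material internal to the paper, via $|k_{z_0}(z)|^2=K(z,z)\,|\langle k_{z_0},k_z\rangle|^2$, Cauchy--Schwarz, and Lemmas~\ref{BKbasic} and~\ref{sette} (or alternatively the submean estimate of Lemma~\ref{due} combined with Lemma~\ref{sei}); note that this argument needs no restriction $\delta(z_0)<\delta_r$, so it also recovers the Remark following the lemma that the upper estimates hold on all of $D$. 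For the \emph{lower} bound, however, your ``proof'' is an attribution rather than an argument: the rescaling and model-comparison you sketch is precisely the content of Li's analysis, so invoking it puts you in exactly the same position as the paper's citation --- which is legitimate, and you correctly identify both the right source and the right obstructions (the anisotropy of Kobayashi balls, of Euclidean size $\approx\delta(z_0)$ normally but $\approx\delta(z_0)^{1/2}$ tangentially, which defeats the naive gradient-oscillation bound; and the necessity of the threshold $\delta_r$, since the Bergman kernel of a strongly pseudoconvex domain may vanish off the diagonal in the interior). One structural remark: once you cite Li's Theorem~12 you no longer need the inner-product reformulation at all, since the estimate for $|K(z,z_0)|$ converts into the one for $|k_{z_0}(z)|^2$ (and back) directly via $|k_{z_0}(z)|^2=|K(z,z_0)|^2/K(z_0,z_0)$ and Lemma~\ref{BKbasic} --- this is exactly how \cite{AS} passes from its Lemma~3.2 to its Corollary~3.3. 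Your formulation through the uniform non-orthogonality $|\langle k_{z_0},k_z\rangle|\ge c_r$ of nearby normalized kernels is an equivalent and conceptually appealing packaging of the cited estimate, but it should not be mistaken for an independent proof of it.
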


\begin{remark}
Note that in the previous lemma the estimates from above hold even when $\delta(z_0)\ge\delta_r$, possibly with a different constant $c_r$. Indeed, when $\delta(z_0)\ge\delta_r$ and $z\in B_D(z_0,r)$ by Lemma~\ref{sette} there is $\tilde\delta_r>0$ such that $\delta(z)\ge \tilde\delta_r$; as a consequence we can find $M_r>0$ such that $|K(z,z_0)|\le M_r$ as soon as $\delta(z_0)\ge\delta_r$ and $z\in B_D(z_0,r)$, and the assertion follows from the fact that $D$ is a bounded domain.
\end{remark}

A very useful integral estimate is the following:

\begin{proposition}[\textbf{\cite[Corollary~11, Theorem~13]{Li}, \cite[Theorem~2.7]{ARS}}]
\label{intest}
Let $D\subset\subset\C ^n$ be a bounded strongly
pseudoconvex domain, and $z_0\in D$. Let $0<p<+\infty$ and $-1<\beta<(n+1)(p-1)$. 
Then 
\[
\int_D |K(z,w)|^p\delta(w)^\beta\,d\nu(w)\preceq \delta(z)^{\beta-(n+1)(p-1)}
\]
and
\[
\int_D |k_z(w)|^p\delta(w)^\beta\,d\nu(w)\preceq\delta(z)^{\beta-(n+1)(\frac{p}{2}-1)}\;.
\]
\end{proposition}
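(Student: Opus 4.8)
The plan is to deduce the second estimate from the first, and then to establish the first by a Kobayashi-ball decomposition that reduces it to a summation.

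First I would reduce the normalized-kernel estimate to the kernel estimate. By Lemma~\ref{BKbasic} we have $K(z,z)\approx\delta(z)^{-(n+1)}$, so that $|k_z(w)|^p=|K(w,z)|^p K(z,z)^{-p/2}\approx|K(z,w)|^p\,\delta(z)^{(n+1)p/2}$, using the Hermitian symmetry $|K(w,z)|=|K(z,w)|$. Pulling the factor $\delta(z)^{(n+1)p/2}$ out of the integral and applying the first estimate gives a bound of order $\delta(z)^{(n+1)p/2+\beta-(n+1)(p-1)}=\delta(z)^{\beta-(n+1)(\frac{p}{2}-1)}$, which is exactly the claimed exponent. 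Thus everything rests on the first inequality, for the same range $-1<\beta<(n+1)(p-1)$.

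For the first inequality I would fix $r\in(0,1)$ and an associated $r$-lattice $\{a_k\}$ (Lemma~\ref{uno}), and split $\int_D=\sum_k\int_{B_D(a_k,r)}$. On each ball $\delta(w)\approx\delta(a_k)$ by Lemma~\ref{sette} and $\nu\bigl(B_D(a_k,r)\bigr)\approx\delta(a_k)^{n+1}$ by Lemma~\ref{sei}. Since $w\mapsto|K(z,w)|^p$ is plurisubharmonic (the conjugate kernel $\overline{K(z,\cdot)}$ being holomorphic), the sub-mean estimate of Lemma~\ref{due}, together with its two-sided counterpart that holds uniformly on Kobayashi balls, lets me compare $|K(z,w)|$ with its value $|K(z,a_k)|$ at the center and replace each local integral by $|K(z,a_k)|^p\,\delta(a_k)^{\beta+n+1}$, up to a uniform constant. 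Summing, the whole problem reduces to bounding $\sum_k|K(z,a_k)|^p\,\delta(a_k)^{\beta+n+1}$ by $\delta(z)^{\beta-(n+1)(p-1)}$.

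This last summation is the main obstacle, and it is where genuine off-diagonal information on the Bergman kernel is needed: the stated lemmas describe $K$ only near the diagonal, whereas the lattice points $a_k$ range over all of $D$. I would group the lattice points into Whitney-type shells according to their Kobayashi distance from $z$ and their boundary-distance scale, and estimate $|K(z,a_k)|$ on each shell through the anisotropic size estimate for the kernel of a strongly pseudoconvex domain (equivalently, by transferring the Forelli--Rudin integral estimate from the ball via a comparison of $K$ with $|\Phi(z,w)|^{-(n+1)}$ for a suitable local distance-like function $\Phi$), after which the shells sum as a convergent geometric-type series. A sanity check clarifies both restrictions on $\beta$: the single shell containing the diagonal already contributes $\delta(z)^{-(n+1)p}\,\delta(z)^{\beta}\,\delta(z)^{n+1}=\delta(z)^{\beta-(n+1)(p-1)}$, so the claimed exponent is forced; the hypothesis $\beta>-1$ guarantees local finiteness of $\delta^\beta\,d\nu$, while $\beta<(n+1)(p-1)$ makes this diagonal term dominate and ensures the off-diagonal tail converges to the same order.
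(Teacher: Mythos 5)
First, a point of order: the paper does not prove Proposition~\ref{intest} at all --- it is imported verbatim from \cite[Corollary~11, Theorem~13]{Li} and \cite[Theorem~2.7]{ARS} --- so your argument has to stand on its own. Your opening reduction does: since $|k_z(w)|^p=|K(z,w)|^p K(z,z)^{-p/2}$ and $K(z,z)\approx\delta(z)^{-(n+1)}$ by Lemma~\ref{BKbasic}, the second estimate follows from the first with exactly the exponent you computed. The trouble starts with the first estimate. The lattice decomposition is harmless, but the step where you replace $\int_{B_D(a_k,r)}|K(z,w)|^p\delta(w)^\beta\,d\nu(w)$ by $|K(z,a_k)|^p\delta(a_k)^{\beta+n+1}$ has no support in the paper's toolkit: Lemma~\ref{due} bounds a point value by a ball average, which is the wrong direction for a sup-by-center bound (and if you use it merely to bound the supremum over $B_D(a_k,r)$ by an average over $B_D(a_k,R)$, summing over $k$ just returns the integral you started from, so nothing is gained); and Lemma~\ref{piu} controls $|K(z,z_0)|$ only for $z\in B_D(z_0,r)$, i.e.\ at diagonal scale. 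The comparability $|K(z,w)|\approx|K(z,a_k)|$ for $w\in B_D(a_k,r)$ \emph{uniformly in $z$ far from the ball} is a genuinely off-diagonal statement; the ``two-sided counterpart'' of the submean estimate that you invoke does not exist among the quoted lemmas, and is itself a consequence of the deep kernel estimates you defer to later.

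More seriously, the final step --- where all the analytic content lives --- is taken as a black box. The anisotropic size estimate $|K(z,w)|\approx|\Phi(z,w)|^{-(n+1)}$ for strongly pseudoconvex domains (Fefferman, Boutet de Monvel--Sj\"ostrand, or Kerzman-type constructions) and the ensuing Forelli--Rudin-type summation are precisely the content of the results of Li that the proposition cites; asserting that ``the shells sum as a convergent geometric-type series'' without defining the shells, proving the kernel bound on each one, and tracking where the hypotheses enter (the condition $\beta<(n+1)(p-1)$ is what makes the near-boundary part of that integral converge with the stated order, $\beta>-1$ what makes the interior part finite --- your sanity check is the right heuristic but is not a proof of convergence) amounts to reducing the proposition to a statement of the same depth. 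So: the derivation of the second inequality from the first is correct, but the proof of the first inequality is an unexecuted plan whose key ingredient is exactly the cited theorem, and this is a genuine gap.
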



Finally, the normalized Bergman kernel can be used to build functions belonging to suitable weighted Bergman spaces:

\begin{lemma}[\textbf{\cite[Lemma~2.6]{HLZ}}]
\label{th:fact4}
Let $p>0$ and $\theta>1-\frac{1}{n+1}$ be given, and let $\alpha=(n+1)(\theta-1)>-1$. 
Take $\beta\in\mathbb{N}$ such that $\beta p>\max\{\theta,(p-1)\frac{n}{n+1}+\theta\}$ and put
\[
\tau=(n+1)\left[\frac{\beta}{2}-\frac{\theta}{p}\right]\;.
\]
For each $a\in D$ set $f_a=\delta(a)^\tau k_a^\beta$. Let $\{a_k\}$ be an $r$-lattice and $\mathbf{c}=\{c_k\}\in\ell^p$, and put
\[
f=\sum_{k=0}^\infty c_kf_{a_k}\;.
\]
Then $f\in A^p(D,\alpha)$ with $\|f\|_{p,\alpha}\preceq \|\mathbf{c}\|_p$.
\end{lemma}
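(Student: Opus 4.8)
The plan is to reduce everything to the norm estimate $\|f\|_{p,\alpha}^p\preceq\|\mathbf{c}\|_p^p$ for the partial sums; once this is in hand, completeness of $A^p(D,\alpha)$ together with the fact that norm convergence forces local uniform convergence (apply the submean estimate of Lemma~\ref{due} on a compact exhaustion, where $\delta$ is bounded below) yields both holomorphy of the limit $f$ and membership $f\in A^p(D,\alpha)$. So I would concentrate on bounding $\int_D|\sum_k c_kf_{a_k}|^p\,\delta^\alpha\,d\nu$, splitting into the two regimes $0<p\le 1$ and $p>1$. The case $0<p\le 1$ is immediate from subadditivity, $|f|^p\le\sum_k|c_k|^p|f_{a_k}|^p$: writing $|f_{a_k}|^p=\delta(a_k)^{\tau p}|k_{a_k}|^{\beta p}$ and applying the second estimate of Proposition~\ref{intest} with exponent $\beta p$ and weight $\alpha$ (legitimate since $-1<\alpha<(n+1)(\beta p-1)$, the upper bound being exactly $\theta<\beta p$), the integral is $\preceq\delta(a_k)^{\tau p+\alpha-(n+1)(\beta p/2-1)}$, whose exponent computes to $0$; this gives $\int_D|f_{a_k}|^p\,\delta^\alpha\,d\nu\preceq 1$ uniformly and hence $\sum_k|c_k|^p=\|\mathbf{c}\|_p^p$. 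Only $\beta p>\theta$ is used, which is the content of the hypothesis when $p\le 1$.

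For $p>1$ the core technical step is a sublemma converting a lattice sum into a Bergman integral: for a real exponent $s$,
\[
\sum_k\delta(a_k)^{s}\,|k_{a_k}(z)|^\beta\preceq\int_D\delta(w)^{\,s+(n+1)(\beta/2-1)}\,|K(z,w)|^\beta\,d\nu(w)\;.
\]
I would prove this by applying the submean estimate (Lemma~\ref{due}) to the plurisubharmonic function $w\mapsto|K(w,z)|^\beta$ on each ball $B_D(a_k,r)$, replacing $\delta(a_k)$ by $\delta(w)$ via Lemma~\ref{sette}, using $\nu(B_D(a_k,r))\approx\delta(a_k)^{n+1}$ (Lemma~\ref{sei}) and $K(a_k,a_k)^{-\beta/2}\approx\delta(a_k)^{(n+1)\beta/2}$ (Lemma~\ref{BKbasic}), and finally summing with the finite-overlap property of the $r$-lattice (Lemma~\ref{uno}). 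Whenever the resulting weight lies in the admissible range, the first estimate of Proposition~\ref{intest} bounds the right-hand side by a power of $\delta(z)$.

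The estimate for $p>1$ then proceeds by a \emph{weighted} H\"older inequality: from $|f(z)|\le\sum_k|c_k|\,\delta(a_k)^\tau|k_{a_k}(z)|^\beta$ I insert an auxiliary weight $\delta(a_k)^{y}$, keeping the complementary factor $\delta(a_k)^{\tau-y}$ with $|c_k|$, so that H\"older with exponents $p,p'$ bounds $|f(z)|^p$ by the product of $\sum_k|c_k|^p\delta(a_k)^{(\tau-y)p}|k_{a_k}(z)|^\beta$ and a $(p/p')$-power of $\sum_k\delta(a_k)^{yp'}|k_{a_k}(z)|^\beta$. The second factor is controlled by the sublemma followed by Proposition~\ref{intest} (first form); integrating the product against $\delta^\alpha\,d\nu$ and evaluating the remaining inner integrals by Proposition~\ref{intest} (second form) leaves a sum $\sum_k|c_k|^p\delta(a_k)^{m}$. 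A pleasant feature, which I would record as a short computation, is that $m=0$ \emph{for every} admissible $y$; thus the estimate reduces to $\sum_k|c_k|^p=\|\mathbf{c}\|_p^p$ the moment a single admissible $y$ exists.

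Consequently the whole difficulty concentrates in the feasibility of the weight. The two applications of Proposition~\ref{intest} impose four constraints on $y$, namely two lower and two upper bounds, and one must check that each lower bound is strictly below each upper bound. Three of these comparisons simplify to the automatic inequalities $\beta>1-\frac{1}{n+1}$ (clear since $\beta\ge 1$) and $\theta>1-\frac{1}{n+1}$, while the last one is equivalent, after simplification, to $\beta p>\theta+(p-1)\frac{n}{n+1}$, which is precisely the standing hypothesis on $\beta$. I expect this bookkeeping — setting up the weighted H\"older split correctly and verifying that the admissible interval for $y$ is nonempty — to be the main obstacle, the remaining steps being routine applications of the cited lemmas.
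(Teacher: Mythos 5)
The paper itself contains no proof of this lemma: it is imported verbatim from \cite[Lemma~2.6]{HLZ}, so there is no internal argument to compare yours against. Your proof is correct and self-contained, and it follows what is essentially the standard atomic-decomposition argument for such statements. I checked the computations on which everything hinges. For $0<p\le 1$: subadditivity plus the second estimate of Proposition~\ref{intest} applies since $\alpha<(n+1)(\beta p-1)$ is exactly $\theta<\beta p$ (which is the hypothesis in this range, as $(p-1)\frac{n}{n+1}+\theta\le\theta$ when $p\le1$), and the resulting exponent $\tau p+\alpha-(n+1)\bigl(\frac{\beta p}{2}-1\bigr)$ vanishes by the choice of $\tau$. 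For $p>1$: your sublemma converting lattice sums into Bergman integrals is correctly justified (plurisubharmonicity of $w\mapsto|K(w,z)|^\beta$, Lemma~\ref{due}, then Lemmas~\ref{sette}, \ref{sei}, \ref{BKbasic} and the finite-overlap property from Lemma~\ref{uno}); after the weighted H\"older split the exponent of $\delta(a_k)$ in the final sum is
\[
(\tau-y)p+\alpha+yp-(n+1)\tfrac{\beta}{2}\tfrac{p}{p'}-(n+1)\bigl(\tfrac{\beta}{2}-1\bigr)=\tau p+\alpha-(n+1)\tfrac{\beta p}{2}+(n+1)=0
\]
for every $y$, as you claim; and of the four pairwise comparisons among the two lower and two upper bounds on $y$, three reduce to $\beta>\frac{n}{n+1}$ (true since $\beta\ge1$) and $\alpha>-1$, while the fourth is equivalent to $\beta p>(p-1)\frac{n}{n+1}+\theta$, i.e.\ the standing hypothesis, so the admissible interval is nonempty. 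The reduction of holomorphy and membership to the partial-sum estimate via completeness of $A^p(D,\alpha)$ and locally uniform control of point evaluations (Lemma~\ref{due}) is also sound, provided one uses the metric $d(f,g)=\|f-g\|_{p,\alpha}^p$ when $p<1$.
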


We also need to recall a few definitions and results about Carleson measures. 

\begin{definition}
\label{def:Carluno}
Let $0<p$, $q<+\infty$ and $\alpha>-1$. A \emph{$(p,q;\alpha)$-skew Carleson measure} is a
finite positive Borel measure $\mu$ such that 
\[
\int_D |f(z)|^q\,d\mu(z)\preceq \|f\|_{p,\alpha}^q
\]
for all $f\in A^p(D,\alpha)$. In other words, $\mu$ is $(p,q;\alpha)$-skew Carleson if $A^p(D,\alpha)\hookrightarrow L^q(\mu)$ continuously. In this case we shall denote by $\|\mu\|_{p,q;\alpha}$ the operator norm of the inclusion $A^p(D,\alpha)\hookrightarrow L^q(\mu)$.
\end{definition}

\begin{remark}
When $p=q$ we recover the usual (non-skew) notion of Carleson measure for $A^p(D,\alpha)$.
\end{remark} 

\begin{definition}
\label{def:Carldue}
Let $\theta\in\R$, and let $\mu$ be a finite positive Borel measure on~$D$. Given $r\in(0,1)$, let
$\hat\mu_{r,\theta}\colon D\to\R$ be defined by
\[
\hat\mu_{r,\theta}(z)=\frac{\mu\bigl(B_D(z,r)\bigr)}{\nu\bigl(B_D(z,r)\bigr)^\theta}\;;
\]
we shall write $\hat\mu_r$ for $\hat\mu_{r,1}$. 

We say that $\mu$ is a \emph{geometric $\theta$-Carleson measure} if $\hat\mu_{r,\theta}\in L^\infty(D)$ for all $r\in(0,1)$, that is if 
for every $r>0$ we have
\[
\mu\bigl(B_D(z,r)\bigr)\preceq \nu\bigl(B_D(z,r)\bigr)^\theta
\] 
for all $z\in D$, where the constant depends only on~$r$.
\end{definition}

Notice that Lemma~\ref{sei} yields
\begin{equation}
\hat\mu_{r,\theta}\approx \delta^{-(n+1)(\theta-1)}\hat\mu_r\;.
\label{eq:hat}
\end{equation}

In \cite{ARS} we proved (among other things) that, if $p\ge 1$, a measure $\mu$ is $(p,p;\alpha)$-skew Carleson if and only if it is geometric $\theta$-Carleson, where $\alpha=(n+1)(\theta-1)$. 
Hu, Lv and Zhu in \cite{HLZ} have given a similar geometric characterization of $(p,q;\alpha)$-skew Carleson measures for all values of $p$ and $q$; to state their results we need another definition.

 \begin{definition}
 \label{def:Berez}
 Let $\mu$ be a finite positive Borel measure on~$D$, and $s>0$. The \emph{Berezin transform} of \emph{level}~$s$ of~$\mu$ is the function $B^s\mu\colon D\to\R^+\cup\{+\infty\}$ given by
 \[
 B^s\mu(z)=\int_D |k_z(w)|^s\,d\mu(w)\;.
 \]
 \end{definition}

The geometric characterization of $(p,q;\alpha)$-skew Carleson measures is different according to whether $p\le q$ or $p>q$. We first state the characterization for the case $p\le q$.

\begin{theorem}
\label{carthetaCarluno}
Let $D\subset\subset\C ^n$ be a bounded strongly
pseudoconvex domain. Let $0< p\le q<+\infty$ and $1-\frac{1}{n+1}<\theta$; set $\alpha=(n+1)(\theta-1)>-1$.
Then the following assertions are
equivalent:
\begin{itemize}
\item[(i)] $\mu$ is a $(p,q;\alpha)$-skew Carleson measure;
\item[(ii)] $\mu$ is a geometric $\frac{q}{p}\theta$-Carleson measure;
\item[(iii)] there exists $r_0\in(0,1)$ such that $\hat\mu_{r_0,\frac{q}{p}\theta}\in L^\infty(D)$;
\item[(iv)] for every $r\in(0,1)$ and for every $r$-lattice $\{a_k\}$ in $D$ we have
\[
\mu\bigl(B_D(a_k,r)\bigr)\preceq \nu\bigl(B_D(a_k,r)\bigr)^{\frac{q}{p}\theta}\;;
\]
\item[(v)] there exists $r_0\in(0,1)$ and a $r_0$-lattice $\{a_k\}$ in $D$ such that
\[
\mu\bigl(B_D(a_k,r_0)\bigr)\preceq\nu\bigl(B_D(a_k,r_0)\bigr)^{\frac{q}{p}\theta}\;;
\]
\item[(vi)] for some (and hence all) $s>\theta\frac{q}{p}$ we have
\[
B^s\mu(a) \preceq \delta(a)^{(n+1)\left(\theta\frac{q}{p}-\frac{s}{2}\right)}\;;
\]
\item[(vii)] there exists $C>0$ such that for some (and hence all) $t>0$ 
we have
\[
\int_D |K(z,a)|^{\theta\frac{q}{p}+\frac{t}{n+1}}\,d\mu(z)\preceq \delta(a)^{-t}\;.
\]
\end{itemize}
Moreover we have
\begin{equation}\label{norme}
\|\mu\|_{p,q;\alpha}
\approx \|\hat\mu_{r,\frac{q}{p}\theta}\|_\infty\approx\|\delta^{-(n+1)(\frac{q}{p}\theta-1)}\hat\mu_r\|_\infty
\approx \|\delta^{(n+1)\left(\frac{s}{2}-\theta\frac{q}{p}\right)}B^s\mu \|_\infty\;.
\end{equation}
\end{theorem}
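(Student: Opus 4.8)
The plan is to prove the equivalences by a hub-and-spoke scheme centred on the geometric condition, writing $\sigma=\frac qp\theta$ for the geometric exponent (note $\sigma\ge\theta>1-\frac1{n+1}$ since $q\ge p$). First I would dispose of the purely geometric cluster (ii)--(v), which are equivalent among themselves by standard radius-changing and lattice arguments: Lemma~\ref{sei} gives $\nu(B_D(\cdot,r))\approx\delta^{n+1}$ and Lemma~\ref{sette} makes $\delta$ essentially constant on a Kobayashi ball, so a mass bound at one radius or on one lattice transfers to every radius and every lattice after covering a ball by boundedly many balls of another fixed radius (Lemma~\ref{uno}). This simultaneously yields $\|\hat\mu_{r,\sigma}\|_\infty\approx\|\hat\mu_{r',\sigma}\|_\infty$ for all $r,r'$, and, via \eqref{eq:hat}, the second comparison in \eqref{norme}. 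Likewise (vi)$\iff$(vii) is just a change of variable: by Lemma~\ref{BKbasic} one has $|k_a(z)|^s\approx\delta(a)^{(n+1)s/2}|K(z,a)|^s$, so setting $s=\sigma+\frac t{n+1}$ turns the Berezin bound of (vi) into the kernel-integral bound of (vii).

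Next I would link the geometric and Berezin clusters. For (ii)$\Rightarrow$(vi) I fix a lattice, write $B^s\mu(a)=\sum_k\int_{B_D(a_k,r)}|k_a|^s\,d\mu$, replace $|k_a|^s$ and $\mu(B_D(a_k,r))\preceq\nu(B_D(a_k,r))^\sigma\approx\delta(a_k)^{(n+1)\sigma}$ by their values at $a_k$, and recognise the result as comparable to a Riemann sum for $\int_D|k_a(w)|^s\delta(w)^{(n+1)(\sigma-1)}\,d\nu(w)$; Proposition~\ref{intest} (applicable since $-1<(n+1)(\sigma-1)<(n+1)(s-1)$, i.e. $s>\sigma$) evaluates this to $\delta(a)^{(n+1)(\sigma-s/2)}$, which is exactly (vi). Conversely (vi)$\Rightarrow$(ii): restricting the Berezin integral to $B_D(a,r)$ and using $|k_a|^s\succeq\delta(a)^{-(n+1)s/2}$ there (Lemma~\ref{piu}) gives $\delta(a)^{-(n+1)s/2}\mu(B_D(a,r))\preceq B^s\mu(a)\preceq\delta(a)^{(n+1)(\sigma-s/2)}$, whence $\mu(B_D(a,r))\preceq\delta(a)^{(n+1)\sigma}$. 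These two estimates also furnish the last comparison in \eqref{norme}, and the ``some/all'' quantifiers are absorbed here, since a single admissible $s$ already forces (ii), which in turn returns (vi) for every $s>\sigma$.

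The substantial step is (i)$\iff$(ii). For (i)$\Rightarrow$(ii) I would test the embedding on $f=k_a^\beta$ with $\beta\in\N$ large (so that $\theta<\beta p$ and Proposition~\ref{intest} applies): Lemma~\ref{piu} bounds $\int_D|f|^q\,d\mu\succeq\delta(a)^{-(n+1)\beta q/2}\mu(B_D(a,r))$ from below, while Proposition~\ref{intest} bounds $\|f\|_{p,\alpha}^q\preceq\delta(a)^{(n+1)\sigma-(n+1)\beta q/2}$ from above; the common factor $\delta(a)^{-(n+1)\beta q/2}$ cancels and leaves $\mu(B_D(a,r))\preceq\delta(a)^{(n+1)\sigma}\approx\nu(B_D(a,r))^\sigma$, with constant controlled by $\|\mu\|_{p,q;\alpha}$. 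For the converse (ii)$\Rightarrow$(i), I cover $D$ by an $r$-lattice, use that $|f|^p$ is plurisubharmonic together with the sub-mean inequality of Lemma~\ref{due} to bound $\sup_{B_D(a_k,r)}|f|^q$ by $\bigl(\nu(B_D(a_k,r))^{-1}\int_{B_D(a_k,R)}|f|^p\,d\nu\bigr)^{q/p}$, then insert the weight via $\delta\approx\delta(a_k)$ (Lemma~\ref{sette}) and the mass bound (ii); the powers of $\delta(a_k)$ cancel exactly because $\alpha=(n+1)(\theta-1)$, giving $\int_{B_D(a_k,r)}|f|^q\,d\mu\preceq N_k^{q/p}$ with $N_k=\int_{B_D(a_k,R)}|f|^p\delta^\alpha\,d\nu$. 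Summing, using $\sum_k N_k^{q/p}\le(\sum_k N_k)^{q/p}$ and the finite overlap of the lattice, yields $\int_D|f|^q\,d\mu\preceq\|f\|_{p,\alpha}^q$, i.e. (i).

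Tracking constants through the chain (i)$\Rightarrow$(ii)$\Rightarrow\cdots\Rightarrow$(i) then delivers all four comparisons in \eqref{norme}. The only genuinely analytic points, and the ones I expect to be delicate, are the two halves of (i)$\iff$(ii): making the $\delta(a)$-exponents cancel exactly relies on the precise relation $\alpha=(n+1)(\theta-1)$ and on the sharp two-sided kernel estimates of Lemma~\ref{piu} and Proposition~\ref{intest}, and the sufficiency direction uses the hypothesis $p\le q$ in an essential way, through the sequence inclusion $\ell^1\subset\ell^{q/p}$. This is precisely the step that would fail, and would force a different, operator-theoretic argument, in the complementary regime $q<p$.
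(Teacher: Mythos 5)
Your proposal is essentially correct, but it takes a genuinely different route from the paper. The paper does not reprove the equivalences (i)--(vi) at all: it cites \cite[Theorem~3.1]{HLZ} for (i)--(vi) and for the norm comparisons \eqref{norme} (noting that (ii)--(v) were already in \cite{ARS}), and the only thing it proves directly is (vi)$\iff$(vii), by exactly the substitution you describe: Lemma~\ref{BKbasic} converts $B^s\mu$ into $\int_D|K(\cdot,a)|^s\,d\mu$ at the cost of $\delta(a)^{(n+1)s/2}$, and $t=(n+1)\bigl(s-\theta\frac{q}{p}\bigr)>0$ rewrites the exponents. Your proof, by contrast, is self-contained: testing the embedding on $k_a^\beta$ with $\beta\in\N$ (correctly integer, so that powers of the kernel are well defined) plus Proposition~\ref{intest} gives (i)$\Rightarrow$(ii); the covering argument with Lemma~\ref{due}, Lemma~\ref{sette}, the cancellation forced by $\alpha=(n+1)(\theta-1)$, and $\|\cdot\|_{\ell^{q/p}}\le\|\cdot\|_{\ell^1}$ gives (ii)$\Rightarrow$(i), and this is indeed exactly where $p\le q$ enters. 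What your approach buys is a transparent, elementary proof of the $p\le q$ case from the paper's stated lemmas alone; what the paper's approach buys is brevity and consistency of the norm constants with the source it relies on throughout (the same citation machinery is reused in Theorem~\ref{carthetaCarldue}, where the case $q<p$ genuinely requires Khinchine/duality arguments, as you anticipate).

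One step of yours needs repair. In (ii)$\Rightarrow$(vi) you ``replace $|k_a|^s$ by its value at $a_k$'' on each lattice ball and recognize a Riemann sum; this uses the two-sided comparability $|K(w,a)|\approx|K(a_k,a)|$ for $w\in B_D(a_k,r)$ \emph{uniformly in the second argument} $a$, an off-diagonal kernel estimate that is true for strongly pseudoconvex domains but is not among the lemmas quoted in the paper (Lemma~\ref{piu} only compares $K(z,z_0)$ with $\delta(z_0)^{-(n+1)}$ for $z$ in a ball centered at $z_0$). The fix is already in your own toolkit: since only an upper bound is needed, apply Lemma~\ref{due} to the plurisubharmonic function $|k_a|^s$ to get
\[
\sup_{w\in B_D(a_k,r)}|k_a(w)|^s\preceq\frac{1}{\nu\bigl(B_D(a_k,r)\bigr)}\int_{B_D(a_k,R)}|k_a|^s\,d\nu\;,
\]
multiply by $\mu\bigl(B_D(a_k,r)\bigr)\preceq\delta(a_k)^{(n+1)\frac{q}{p}\theta}$, use Lemma~\ref{sette} to move the weight inside the integral, and sum with finite overlap before invoking Proposition~\ref{intest}. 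With that substitution your argument is complete.
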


\begin{proof} 
The equivalence of (i)--(vi), as well as the equivalence for the norms, follows from \cite[Theorem~3.1]{HLZ} (and the equivalence of (ii)--(v) was already in \cite{ARS}).

Now, by Lemma~\ref{BKbasic}, (vi) is equivalent to
\[
\int_D |K(z,a)|^s\,d\mu(z)\preceq \delta(a)^{(n+1)\left(\theta\frac{q}{p}-s\right)}\;.
\]
Setting $t=(n+1)\left(s-\theta\frac{q}{p}\right)$, which is positive if and only if $s>\theta\frac{q}{p}$,
we see that (vi) is equivalent to
\[
\int_D |K(z,a)|^{\theta\frac{q}{p}+\frac{t}{n+1}}\,d\mu(z)\preceq \delta(a)^{-t}\;,
\] 
that is to (vii).
\end{proof}

The geometric characterization of $(p,q;\alpha)$-skew Carleson measures when $p>q$ has a slightly different flavor: 

\begin{theorem}
\label{carthetaCarldue}
Let $D\subset\subset\C ^n$ be a bounded strongly
pseudoconvex domain. Let $0< q< p<+\infty$ and $1-\frac{1}{n+1}<\theta$; put $\alpha=(n+1)(\theta-1)>-1$.
Then the following assertions are
equivalent:
\begin{itemize}
\item[(i)] $\mu$ is a $(p,q;\alpha)$-skew Carleson measure;
\item[(ii)] $\hat\mu_r \delta^{-\alpha\frac{q}{p}}\in L^{\frac{p}{p-q}}(D)$ for some (and hence any) $r\in(0,1)$;
\item[(iii)] $\hat\mu_{r,\theta}\in L^{\frac{p}{p-q}}(D,\alpha)$ for some (and hence any) $r\in(0,1)$;
\item[(iv)] $\hat\mu_{r,\theta\frac{q}{p}}\in L^{\frac{p}{p-q}}\bigl(D,-(n+1)\bigr)$ for some (and hence any) $r\in(0,1)$;
\item[(v)] for some (and hence any) $r\in(0,1)$ and for some (and hence any) $r$-lattice $\{a_k\}$ in $D$ we have
$\{\hat\mu_{r,\theta\frac{q}{p}}(a_k)\}\in\ell^{\frac{p}{p-q}}$;
\item[(vi)] for some (and hence any) $r\in(0,1)$ and for some (and hence any) $r$-lattice $\{a_k\}$ in $D$ we have
$\{\hat\mu_{r}(a_k)\delta(a_k)^{(n+1)\left(1-\theta\frac{q}{p}\right)}\}\in\ell^{\frac{p}{p-q}}$;
\item[(vii)] for some (and hence all) $s>\theta\frac{q}{p}+\frac{n}{n+1}\left(1-\frac{q}{p}\right)$ we have
\[
\delta^{-(n+1)\left(\theta\frac{q}{p}-\frac{s}{2}\right)}B^s\mu\in L^{\frac{p}{p-q}}\bigl(D,-(n+1)\bigr)\;;
\]
\item[(viii)] for some (and hence all) $s>\theta\frac{q}{p}+\frac{n}{n+1}\left(1-\frac{q}{p}\right)$ we have
\[
\delta^{-(n+1)\left(\theta-\frac{s}{2}\right)}B^s\mu\in L^{\frac{p}{p-q}}(D,\alpha)\;;
\]
\item[(ix)] for some (and hence all) $s>\theta\frac{q}{p}+\frac{n}{n+1}\left(1-\frac{q}{p}\right)$ we have
\[
\delta^{-(n+1)\left(\theta\frac{q}{p}-\frac{s}{2}+\frac{p-q}{p}\right)}B^s\mu\in L^{\frac{p}{p-q}}(D)\;;
\]
\item[(x)] for some (and hence all) $t>(n+1)\left(1-\frac{q}{p}\right)\left(\frac{n}{n+1}-\theta\right)$ we have
\[
\delta^t \int_D |K(\cdot,w)|^{\theta+\frac{t}{n+1}}\,d\mu(w)\in L^{\frac{p}{p-q}}(D,\alpha)\;.
\]
\end{itemize}
Moreover we have
\begin{equation}\label{norme2}
\|\mu\|_{p,q;\alpha}
\approx \|\delta^{-(n+1)(\theta-\frac{s}{2})}B^s\mu\|_{\frac{p}{p-q},\alpha}
\approx  \|\delta^{-(n+1)(\theta-1)\frac{q}{p}}\hat\mu_r \|_{\frac{p}{p-q}}\\
\end{equation}
\end{theorem}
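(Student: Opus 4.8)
The plan is to follow the strategy already used for Theorem~\ref{carthetaCarluno}: isolate the one genuinely analytic step and obtain everything else by weight bookkeeping and discretization. Throughout I set $\sigma=\frac{p}{p-q}$, so that $\sigma-1=\frac{q}{p-q}$, $\sigma\bigl(\frac{q}{p}-1\bigr)=-1$ and $\frac{p-q}{p}\sigma=1$. The analytic backbone---the equivalence of (i) with a weighted $L^{\sigma}$ integrability condition on $\mu$, together with the norm estimate---is the content of the $p>q$ characterization in \cite{HLZ}, the purely geometric $\hat\mu$ versions going back to \cite{ARS}; I would cite these for the link between (i) and, say, (ii). It then remains to show that conditions (ii)--(x) are mutually equivalent.

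First I would dispose of the geometric reformulations (ii)--(iv). Using \eqref{eq:hat} (that is $\hat\mu_{r,\eta}\approx\delta^{-(n+1)(\eta-1)}\hat\mu_r$) together with Lemma~\ref{sei}, each of these three is rewritten as a single weighted integral $\int_D \hat\mu_r^{\sigma}\delta^{a}\,d\nu$; a direct computation of the exponent $a$ in each case, using the identities for $\sigma$ above, shows that all three produce the same power of $\delta$, whence (ii)$\iff$(iii)$\iff$(iv). The lattice versions (v) and (vi) are then equivalent to (iv) and (ii) respectively by the standard discretization argument: the covering with bounded overlap (Lemma~\ref{uno}), the near-constancy of $\delta$ (Lemma~\ref{sette}) and of $z\mapsto\mu\bigl(B_D(z,r)\bigr)$ on Kobayashi balls, and Lemma~\ref{sei} together give that the weighted $L^{\sigma}$ norm of $\hat\mu_{r,\eta}$ is comparable to the $\ell^{\sigma}$ norm of the lattice values $\{\hat\mu_{r,\eta}(a_k)\}$.

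Next I would bring in the Berezin transform, where the one substantial point is the comparison of $B^{s}\mu$ with $\hat\mu_r$. The lower bound $\delta(\cdot)^{-(n+1)(\frac{s}{2}-1)}\hat\mu_r\preceq B^{s}\mu$ is immediate from restricting the defining integral to $B_D(z,r)$ and invoking the lower estimate of Lemma~\ref{piu} together with Lemma~\ref{sei}. The reverse inequality, needed at the level of the weighted $L^{\sigma}$ norm, is the main obstacle: here I would cover $D$ by an $r$-lattice, split $B^{s}\mu$ as a sum of integrals over the balls, estimate each off-diagonal term through the integral inequality of Proposition~\ref{intest} and the submean estimate of Lemma~\ref{due}, and check that the resulting series converges---this is exactly where the lower restriction $s>\theta\frac{q}{p}+\frac{n}{n+1}\bigl(1-\frac{q}{p}\bigr)$ enters (this comparison is also essentially available in \cite{HLZ}). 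This yields the equivalence of (vii) with (iv). Once (vii) is in hand, (vii), (viii) and (ix) are mutually equivalent by pure algebra, each being the same integrability statement for $B^{s}\mu$ with the weight $\delta^{-(n+1)}$, $\delta^{\alpha}$ or $1$ absorbed into the integrand; writing each as $\int_D (B^{s}\mu)^{\sigma}\delta^{b}\,d\nu$ and using $\sigma\bigl(\frac{q}{p}-1\bigr)=-1$ and $\frac{p-q}{p}\sigma=1$ shows the total power $b$ coincides in all three, and the admissible ranges of $s$ are literally the same.

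Finally, condition (x). By Lemma~\ref{BKbasic} one has $\int_D|K(\cdot,w)|^{s}\,d\mu(w)\approx\delta^{-(n+1)s/2}B^{s}\mu$, so putting $s=\theta+\frac{t}{n+1}$ turns the integrand in (x) into $\delta^{-(n+1)(\theta-\frac{s}{2})}B^{s}\mu$; thus (x) is exactly (viii), and the threshold $t>(n+1)\bigl(1-\frac{q}{p}\bigr)\bigl(\frac{n}{n+1}-\theta\bigr)$ translates precisely into $s>\theta\frac{q}{p}+\frac{n}{n+1}\bigl(1-\frac{q}{p}\bigr)$. The norm comparison \eqref{norme2} is then read off from the chain of $\approx$ relations established above, its $B^{s}\mu$ factor being the one from (viii) and its $\hat\mu_r$ factor the one from (ii).
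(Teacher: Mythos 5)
Your proof is correct, and its skeleton---delegate the hard embedding theorem to \cite{HLZ} and reduce everything else to weight bookkeeping---is the same as the paper's; the difference lies in how much you delegate. The paper cites \cite[Theorem~3.3]{HLZ} for the equivalence of (i), (ii), (vi) \emph{and} (ix), together with the norm estimates; since a lattice condition and a Berezin-transform condition are thereby already in hand, its remaining work is purely algebraic: the exponent identities giving (ii)$\iff$(iii)$\iff$(iv), (v)$\iff$(vi) and (vii)$\iff$(viii)$\iff$(ix), plus the substitution $s=\theta+\frac{t}{n+1}$ via Lemma~\ref{BKbasic} giving (viii)$\iff$(x). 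You instead cite \cite{HLZ} only for (i)$\iff$(ii), and then undertake to prove by hand both the discretization ((ii)/(iv) versus (v)/(vi)) and the comparison between $B^s\mu$ and $\hat\mu_r$ ((vii)$\iff$(iv)); these are exactly the nontrivial analytic blocks. Your sketch of the latter (lattice splitting, comparability of the kernel on Kobayashi balls, Proposition~\ref{intest}, Lemma~\ref{due}, convergence of the resulting series as the source of the threshold on $s$) is the standard argument and does go through, but two points deserve care: the comparability $|K(z,w)|\approx|K(z,a_k)|$ for $w\in B_D(a_k,r)$ is not among the lemmas stated in this paper and must itself be imported or proved, and $z\mapsto\mu\bigl(B_D(z,r)\bigr)$ is not two-sidedly ``near-constant'' on a Kobayashi ball but only dominated after enlarging the radius, so the discretization is a chain of one-sided estimates with varying radii---which is precisely where the ``for some and hence any $r$'' clauses get established rather than assumed. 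In short, your route is more self-contained but duplicates work that \cite[Theorem~3.3]{HLZ} already packages; since you fall back on that citation anyway for the Berezin comparison, the two proofs are ultimately equivalent, the paper's being shorter because it imports the lattice and Berezin conditions wholesale.
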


\begin{proof}
The equivalence of (i), (ii), (vi) and (ix), as well as the equivalence of the norms, is in \cite[Theorem~3.3]{HLZ}.

Recalling that, by Lemma~\ref{sei}, $\hat\mu_{r,\theta}\approx \hat\mu_r \delta^{(n+1)(1-\theta)}$, 
it is easy to see that the equalities
\[
\begin{aligned}
-(n+1)(\theta-1)\frac{q}{p}\frac{p}{p-q}&=(n+1)(1-\theta)\frac{p}{p-q}+(n+1)(\theta-1)\\
&=(n+1)\left(1-\theta\frac{q}{p}\right)\frac{p}{p-q}-(n+1)
\end{aligned}
\]
yield the equivalence of (ii), (iii) and (iv).

The fact that $\hat\mu_{r,\theta}\approx \hat\mu_r \delta^{(n+1)(1-\theta)}$ immediately yields
the equivalence between (v) and (vi).


The equalities
\[
\begin{aligned}
-(n+1)\left(\theta\frac{q}{p}-\frac{s}{2}\right)\frac{p}{p-q}-(n+1)&=-(n+1)\left(\theta-\frac{s}{2}\right)\frac{p}{p-q}+(n+1)(\theta-1)\\
&=-(n+1)\left(\theta\frac{q}{p}-\frac{s}{2}+\frac{p-q}{p}\right)\frac{p}{p-q}
\end{aligned}
\]
yield the equivalence of (vii), (viii) and (ix).

Finally, by Lemma~\ref{BKbasic}, (viii) is equivalent to
\[
\delta^{-(n+1)(\theta-s)}\int_D |K(\cdot,w)|^s\,d\mu(w)\in L^{\frac{p}{p-q}}(D,\alpha)\;,
\]
and this is equivalent to (x) 
via the substitution $s=\theta+\frac{t}{n+1}$.
\end{proof}


A consequence of these two theorems is that the property of being $(p,q;\alpha)$-skew Carleson actually depends only on the quotient $q/p$ and on $\alpha$. We shall then introduce the following definition:

\begin{definition}
\label{def:sCdef}
Let $\lambda>0$ and $\gamma>1-\frac{1}{n+1}$. 
A finite positive Borel measure $\mu$ is \emph{$(\lambda,\gamma)$-skew Carleson} if either
\begin{itemize}
\item[--] $\lambda\ge 1$ and $\hat\mu_{r_0,\lambda\gamma}\in L^\infty(D)$ for some (and hence all) $r_0\in(0,1)$; or,
\item[--] $\lambda<1$ and $\hat\mu_{r_0,\gamma}\in L^{\frac{1}{1-\lambda}}\bigl(D,(n+1)(\gamma-1)\bigr)$ for some (and hence all) $r_0\in(0,1)$.
\end{itemize}
\end{definition}

Thus Theorems~\ref{carthetaCarluno} and \ref{carthetaCarldue} say that $\mu$ is $(p,q;\alpha)$-skew Carleson if and only if it is $(q/p,\gamma)$-skew Carleson, where $\alpha=(n+1)(\gamma-1)$.
In particular, we shall write $\|\mu\|_{q/p,\gamma}$ instead of $\|\mu\|_{p,q;(n+1)(\gamma-1)}$.

\smallskip We end this section with the following easy (but useful) consequence of this definition:

\begin{lemma}
\label{th:last}
Let $D\subset\subset\C^n$ be a bounded strongly pseudoconvex domain, $\lambda>0$ and $\gamma>1-\frac{1}{n+1}$. Let $\mu$ be a $(\lambda,\gamma)$-skew Carleson measure, and $\beta>\lambda\left(\frac{n}{n+1}-\gamma\right)$. Then $\mu_\beta=\delta^{(n+1)\beta}\mu$ is a $(\lambda, \gamma+\frac{\beta}{\lambda})$-skew Carleson measure with $\|\mu_\beta\|_{\lambda,\gamma+\frac{\beta}{\lambda}}\approx \|\mu\|_{\lambda,\gamma}$. 
\end{lemma}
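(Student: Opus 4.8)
The plan is to reduce everything to the behaviour of the averaged densities $\hat\mu_{r,\theta}$ and then check the two clauses of Definition~\ref{def:sCdef} directly, exploiting the fact that $\delta$ is essentially constant on Kobayashi balls. First I would establish the key pointwise relation
\[
\widehat{(\mu_\beta)}_{r,\theta}\approx \hat\mu_{r,\theta-\beta}\qquad\text{for every }\theta\in\R.
\]
To see this, note that by Lemma~\ref{sette} we have $\delta(w)\approx\delta(z)$ uniformly for $w\in B_D(z,r)$, whence $\mu_\beta\bigl(B_D(z,r)\bigr)=\int_{B_D(z,r)}\delta^{(n+1)\beta}\,d\mu\approx\delta(z)^{(n+1)\beta}\,\mu\bigl(B_D(z,r)\bigr)$; dividing by $\nu\bigl(B_D(z,r)\bigr)^\theta$ and using Lemma~\ref{sei} to replace $\delta(z)^{(n+1)\beta}$ by $\nu\bigl(B_D(z,r)\bigr)^\beta$ gives the claim. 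Before anything else I would also record that the hypothesis $\beta>\lambda\bigl(\frac{n}{n+1}-\gamma\bigr)$ is exactly what guarantees $\gamma+\frac{\beta}{\lambda}>1-\frac{1}{n+1}$, so that the target notion of $\bigl(\lambda,\gamma+\frac{\beta}{\lambda}\bigr)$-skew Carleson measure is well defined.

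For the case $\lambda\ge1$ the argument is then immediate: applying the key relation with $\theta=\lambda\bigl(\gamma+\frac{\beta}{\lambda}\bigr)=\lambda\gamma+\beta$ gives $\widehat{(\mu_\beta)}_{r,\lambda(\gamma+\beta/\lambda)}\approx\hat\mu_{r,\lambda\gamma}$, so one function lies in $L^\infty(D)$ if and only if the other does, with comparable $L^\infty$-norms. Combined with the equivalence $\|\mu\|_{\lambda,\gamma}\approx\|\hat\mu_{r,\lambda\gamma}\|_\infty$ coming from Theorem~\ref{carthetaCarluno}, this yields both the statement and the norm comparison $\|\mu_\beta\|_{\lambda,\gamma+\beta/\lambda}\approx\|\mu\|_{\lambda,\gamma}$.

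The case $\lambda<1$ requires the real computation. Writing $\gamma'=\gamma+\frac{\beta}{\lambda}$ and $s=\frac{1}{1-\lambda}$, I would combine the key relation with \eqref{eq:hat}, i.e.\ $\hat\mu_{r,\theta}\approx\delta^{-(n+1)(\theta-1)}\hat\mu_r$, to rewrite
\[
\bigl\|\widehat{(\mu_\beta)}_{r,\gamma'}\bigr\|_{s,(n+1)(\gamma'-1)}^s\approx\int_D\delta^{-s(n+1)(\gamma'-\beta-1)+(n+1)(\gamma'-1)}\,\hat\mu_r^{\,s}\,d\nu.
\]
The heart of the matter is the exponent bookkeeping: substituting $\gamma'=\gamma+\frac{\beta}{\lambda}$ and $s=\frac{1}{1-\lambda}$, the power of $\delta$ collapses to $(n+1)(\gamma-1)\frac{-\lambda}{1-\lambda}$, which is precisely the $\delta$-power obtained by expanding $\|\hat\mu_{r,\gamma}\|_{s,(n+1)(\gamma-1)}^s$ in the same way. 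Hence the two weighted $L^{s}$-norms are comparable, so $\mu_\beta$ satisfies the $\lambda<1$ clause of Definition~\ref{def:sCdef} if and only if $\mu$ does, and the norm comparison follows from the corresponding equivalence in Theorem~\ref{carthetaCarldue}. I expect this last exponent cancellation to be the only delicate point; everything else is a mechanical consequence of Lemmas~\ref{sei} and \ref{sette}.
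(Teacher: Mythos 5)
Your proof is correct and follows essentially the same route as the paper: both establish the pointwise comparison of the averaged densities of $\mu_\beta$ and $\mu$ via Lemmas~\ref{sei} and~\ref{sette}, and then invoke the geometric characterizations in Theorems~\ref{carthetaCarluno} and~\ref{carthetaCarldue}. The only (cosmetic) difference is in the case $0<\lambda<1$, where you verify the weighted condition of Definition~\ref{def:sCdef} by exponent bookkeeping inside the integral, while the paper uses the equivalent unweighted condition (ii) of Theorem~\ref{carthetaCarldue}, for which the cancellation is a one-line pointwise identity $(\widehat{\mu_\beta})_r\,\delta^{-(n+1)(\gamma+\frac{\beta}{\lambda}-1)\lambda}\approx\hat\mu_r\,\delta^{-(n+1)(\gamma-1)\lambda}$.
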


\begin{proof}
First of all, remark that using Lemmas~\ref{sei} and~\ref{sette} it is easy to check that
\[
(\widehat{\mu_\beta})_r\approx \delta^{(n+1)\beta}\hat\mu_r\;.
\] 

Assume $0<\lambda<1$. By Theorem~\ref{carthetaCarldue}, we know that $\hat\mu_r\delta^{-(n+1)(\gamma-1)\lambda}\in L^{\frac{1}{1-\lambda}}(D)$. Therefore
\[
(\widehat{\mu_\beta})_r\delta^{-(n+1)(\gamma+\frac{\beta}{\lambda}-1)\lambda}\approx \hat\mu_r\delta^{-(n+1)(\gamma-1)\lambda}\in L^{\frac{1}{1-\lambda}}(D)\;,
\]
and again Theorem~\ref{carthetaCarldue} implies that $\mu_\beta$ is $(\lambda,\gamma+\frac{\beta}{\lambda})$-skew Carleson with $\|\mu_\beta\|_{\lambda,\gamma+\frac{\beta}{\lambda}}\approx \|\mu\|_{\lambda,\gamma}$.

If $\lambda\ge 1$, again Lemmas~\ref{sei} and~\ref{sette} yield 
\[
(\widehat{\mu_\beta})_{r,\lambda\gamma+\beta}\approx (\widehat{\mu_\beta})_r\delta^{-(n+1)(\lambda\gamma+\beta-1)}\approx \hat\mu_r \delta^{-(n+1)(\lambda\gamma-1)}\approx
\hat\mu_{r,\lambda\gamma}
\]
and Theorem~\ref{carthetaCarluno} yields the assertion.
%
\end{proof}

\section{Proof of the main result}

The proof of the main result will use two closely related operators. The first one is a Toeplitz-like operator $T^\beta_\mu$, depending on a parameter $\beta\in\N^*$ and on a finite positive Borel measure $\mu$, defined by the formula
\begin{equation}
T^\beta_\mu f(z)=\int_D K(z,w)^\beta f(w)\,d\mu(w)
\label{eq:2.T}
\end{equation}
for suitable functions $f\colon D\to\C$; part of the work will exactly be identifying functional spaces that can act as domain and/or codomain of such an operator. We need $\beta$ to be a natural number because the Bergman kernel in general might have zeroes and $D$ is not necessarily simply connected.

The second operator $S^{s,r}_{t,\mu}$ depends on~$\mu$ and three positive real parameters $r$, $s$, $t>0$ and is defined by
\begin{equation}
S^{s,r}_{\mu,t}f(z)=\delta(z)^{(n+1)s}\int_D |k_z(w)|^t |f(w)|^r\,d\mu(w)\;,
\label{eq:2.S}
\end{equation}
again for suitable functions $f\colon D\to\C$. This time the exponents do not need to be integers. Notice that Lemma~\ref{BKbasic} yields
\begin{equation}
|S^{s,r}_{\mu,t}f(z)|\approx \delta(z)^{(n+1)(s+\frac{t}{2})}\int_D |K(z,w)|^t|f(w)|^r\,d\mu(w)\;.
\label{eq:biS}
\end{equation}

Therefore it is not surprising that, under suitable hypotheses we can use the norm of the operators $S^{s,r}_{t,\mu}$ to bound the norm of the operators $T^\beta_\mu$. We start with a preliminary lemma:

\begin{lemma}
\label{th:frombetatot}
Let $D\subset\subset\C ^n$ be a bounded strongly pseudoconvex domain, and $\mu$ a positive finite Borel measure on $D$. Then for every $\beta\ge t>0$ we have
\[
\int_D |K(z,w)|^\beta\,d\mu(w)\preceq \delta(z)^{-(n+1)(\beta-t)}\int_D |K(z,w)|^t\,d\mu(w)\;.
\]
\end{lemma}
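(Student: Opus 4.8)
The plan is to collapse the whole integral inequality to a single pointwise estimate on the Bergman kernel and then simply integrate. Since $\beta\ge t$, for every $w$ we may split
\[
|K(z,w)|^\beta=|K(z,w)|^{\beta-t}\,|K(z,w)|^t,
\]
so the statement follows at once from the uniform pointwise bound
\[
|K(z,w)|\preceq\delta(z)^{-(n+1)}\qquad\text{for all }z,w\in D,\qquad(\star)
\]
with the implied constant independent of both $z$ and $w$: raising $(\star)$ to the power $\beta-t$, multiplying by $|K(z,w)|^t$, and integrating $d\mu(w)$ produces exactly the asserted inequality, with a constant that does not depend on $\mu$. Conversely, testing the inequality on Dirac masses $\mu=\delta_{w_0}$ shows that $(\star)$ is not merely sufficient but in fact \emph{equivalent} to the lemma, so $(\star)$ is genuinely the heart of the matter.

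The task thus reduces to proving $(\star)$. When $w$ lies in a fixed Kobayashi ball around $z$, say $w\in B_D(z,r)$, this is immediate: Lemma~\ref{piu} together with the Remark following it gives $|K(z,w)|\preceq\delta(w)^{-(n+1)}$, while Lemma~\ref{sette} gives $\delta(w)\approx\delta(z)$ on $B_D(z,r)$, whence $|K(z,w)|\preceq\delta(z)^{-(n+1)}$. The entire difficulty is therefore the \emph{off-diagonal} regime, namely $w$ far from $z$ in the Kobayashi distance and, in particular, much closer to $\partial D$ than $z$ is.

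Closing this gap is the main obstacle, and the naive attempts fall short. Applying the submean estimate of Lemma~\ref{due} to the plurisubharmonic function $\zeta\mapsto|K(\zeta,w)|^p$ on $B_D(z,R)$, using $\nu(B_D(z,r))\approx\delta(z)^{n+1}$ (Lemma~\ref{sei}) and the integral estimate of Proposition~\ref{intest}, yields only
\[
|K(z,w)|\preceq\delta(z)^{-(n+1)(1-1/p)}\,\delta(w)^{-(n+1)/p}=\delta(z)^{-(n+1)}\Bigl(\tfrac{\delta(z)}{\delta(w)}\Bigr)^{(n+1)/p}
\]
for every $p>1$ (the case $p=2$ being precisely the Cauchy--Schwarz bound $|K(z,w)|\le\sqrt{K(z,z)K(w,w)}$ coming from Lemma~\ref{BKbasic}). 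The residual factor $(\delta(z)/\delta(w))^{(n+1)/p}$ tends to $1$ as $p\to\infty$ for fixed $z,w$, but not uniformly once $\delta(w)\to0$, so these crude estimates do not suffice. To obtain $(\star)$ uniformly one must invoke the genuine off-diagonal decay of the Bergman kernel on a strongly pseudoconvex domain: for the model cases of the disc and the ball it is the elementary bound $|1-\langle z,w\rangle|\ge 1-|z|\approx\delta(z)$ applied to the explicit kernel, and in general it is the Fefferman-type boundary behaviour of $K$ recorded in the references cited earlier. Once $(\star)$ is in hand, the factorization of the first paragraph finishes the proof in a single line.
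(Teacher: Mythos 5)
Your proof is correct and is essentially the paper's own argument: the paper likewise factors $|K(z,w)|^\beta=|K(z,w)|^{\beta-t}\,|K(z,w)|^t$ and concludes by citing, without proof, exactly your estimate $(\star)$, i.e.\ the known bound $\sup_{w\in D}|K(z,w)|\preceq\delta(z)^{-(n+1)}$, its only extra wrinkle being an inessential preliminary splitting of $D$ into $\{w : |K(z,w)|\ge 1\}$ and its complement before applying that sup-bound. Your additional analysis of $(\star)$ itself (on-diagonal via Lemma~\ref{piu} and its Remark, off-diagonal deferred to Fefferman-type kernel estimates in the references) is sound and is precisely the content the paper compresses into the phrase ``the known estimate''.
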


\begin{proof}
Given $z\in D$ put $D_1=\{w\in D\mid |K(z,w)|\ge 1\}$ and $D_0=D\setminus D_1$. Then
\[
\begin{aligned}
\int_D |K(z,w)|^\beta\,d\mu(w) &=\int_{D_0} |K(z,w)|^t |K(z,w)|^{\beta-t}\,d\mu(w)+\int_{D_1}|K(z,w)|^t |K(z,w)|^{\beta-t}\,d\mu(w)\\
&\le \int_{D_0} |K(z,w)|^t \,d\mu(w)+\sup_{w\in D}|K(z,w)|^{\beta-t}\int_{D_1}|K(z,w)|^t\,d\mu(w)\\
&\preceq \sup_{w\in D}|K(z,w)|^{\beta-t}\int_{D}|K(z,w)|^t\,d\mu(w)\;,
\end{aligned}
\]
and the assertion follows from the known estimate
\[
\sup_{w\in D}|K(z,w)|\preceq \delta(z)^{-(n+1)}\;.
\]
\end{proof}

We then have the following estimates.

\begin{lemma}
\label{th:fromStoTnew}
Let $D\subset\subset\C ^n$ be a bounded strongly
pseudoconvex domain, and $\mu$ a positive finite Borel measure on $D$. Choose $r\ge 1$, $s$, $t$, $p$, $q>0$, $\sigma$, $\theta_1>1-\frac{1}{n+1}$ and $\beta\in\N^*$. Then:
\begin{itemize}
\item[(i)] if $r=1$, $q\ge p$, $\beta\ge t$ and $\theta_1\le q\left[\frac{\sigma}{p}+\frac{1}{q}-\frac{1}{p}+\frac{t}{2}-\beta-s\right]$ we have
\[
\|T^\beta_\mu f\|_{p,(n+1)(\sigma-1)}\preceq \|S^{s,1}_{\mu,t}f\|_{q,(n+1)(\theta_1-1)}\;;
\]
\item[(ii)] if $r>1$, $q\ge p/r$, $\beta\ge t/r$,  we have
\[
\|T^\beta_\mu f\|_{p,(n+1)(\sigma-1)}\preceq \|\delta^{-(n+1)(\gamma-\frac{\alpha}{2})} B^\alpha\mu\|^{1/r'}_{\frac{1}{1-\lambda},(n+1)(\gamma-1)}\|S^{s,r}_{\mu,t}f\|^{1/r}_{q,(n+1)(\theta_1-1)}\;,
\]
where $r'$ is the conjugate exponent of~$r$ and
\[
\lambda=1+r'\left[\frac{1}{qr}-\frac{1}{p}\right]<1\;, \quad \alpha=r'\left(\beta-\frac{t}{r}\right)\;,\quad \gamma=\frac{r'}{\lambda}\left[\beta+\frac{1}{r}\left(s-\frac{t}{2}\right)+
\frac{\theta_1}{qr}-\frac{\sigma}{p}\right]\;.
\]
\end{itemize}
\end{lemma}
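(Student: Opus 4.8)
The plan is to treat both parts by the same two-step scheme: first produce a pointwise majorant for $T^\beta_\mu f$ in terms of $S^{s,r}_{\mu,t}f$ (and, in part (ii), also the Berezin transform $B^\alpha\mu$), and then integrate that majorant, controlling the resulting weight by a single application of H\"older's inequality. Throughout I would start from the trivial bound $|T^\beta_\mu f(z)|\le\int_D|K(z,w)|^\beta|f(w)|\,d\mu(w)$ and use \eqref{eq:biS} to convert integrals of $|K(z,\cdot)|^t|f|^r$ against $\mu$ into values of $S^{s,r}_{\mu,t}f$.

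For part (i) I would apply Lemma~\ref{th:frombetatot} to the finite measure $|f|\,d\mu$ (legitimate, since the constant there does not depend on the measure): because $\beta\ge t$ this gives $\int_D|K(z,w)|^\beta|f(w)|\,d\mu(w)\preceq\delta(z)^{-(n+1)(\beta-t)}\int_D|K(z,w)|^t|f(w)|\,d\mu(w)$, and then \eqref{eq:biS} with $r=1$ yields the pointwise estimate $|T^\beta_\mu f(z)|\preceq\delta(z)^{-(n+1)(\beta+s-t/2)}\,S^{s,1}_{\mu,t}f(z)$. Raising to the $p$-th power, inserting the weight $\delta^{(n+1)(\sigma-1)}$ and integrating, I would split off the $S$-factor by H\"older with exponents $q/p\ge1$ and its conjugate, so that the $S$-part reassembles into $\|S^{s,1}_{\mu,t}f\|_{q,(n+1)(\theta_1-1)}^p$ while the remaining factor is a pure power $\int_D\delta^c\,d\nu$. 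A short computation shows that the hypothesis $\theta_1\le q[\sigma/p+1/q-1/p+t/2-\beta-s]$ is exactly the statement that the leftover exponent $c$ is $\ge0$; since $D$ is bounded this makes $\delta^c$ bounded, hence the last integral finite, and the estimate follows. (When $q=p$ no H\"older is needed: one simply absorbs the nonnegative power of $\delta$ into the weight.)

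For part (ii) the new feature is a preliminary H\"older step in the inner integral. Writing $|K(z,w)|^\beta|f(w)|=\bigl(|K(z,w)|^{t/r}|f(w)|\bigr)\bigl(|K(z,w)|^{\beta-t/r}\bigr)$ and applying H\"older in $w$ with exponents $r$ and $r'$ (the exponent $\beta-t/r\ge0$ being guaranteed by $\beta\ge t/r$) produces the product of $\bigl(\int_D|K|^t|f|^r\,d\mu\bigr)^{1/r}$ and $\bigl(\int_D|K|^{r'(\beta-t/r)}\,d\mu\bigr)^{1/r'}$. By \eqref{eq:biS} the first factor is comparable to $\bigl(\delta^{-(n+1)(s+t/2)}S^{s,r}_{\mu,t}f\bigr)^{1/r}$, while Lemma~\ref{BKbasic} (in the form $|K(z,w)|^\alpha\approx\delta(z)^{-(n+1)\alpha/2}|k_z(w)|^\alpha$, with $\alpha=r'(\beta-t/r)$) turns the second factor into $\bigl(\delta^{-(n+1)\alpha/2}B^\alpha\mu\bigr)^{1/r'}$. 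Collecting powers of $\delta$ yields a pointwise bound of the shape $|T^\beta_\mu f(z)|\preceq\delta(z)^{-m}(S^{s,r}_{\mu,t}f(z))^{1/r}(B^\alpha\mu(z))^{1/r'}$. I would then raise to the $p$-th power, insert the weight $\delta^{(n+1)(\sigma-1)}$, integrate, and separate the two factors by a second H\"older in $z$ with exponents $qr/p$ and its conjugate $r'/[p(1-\lambda)]$; one checks these are conjugate using $1-\lambda=r'(1/p-1/(qr))$, and that they reconstruct precisely $\|S^{s,r}_{\mu,t}f\|_{q,(n+1)(\theta_1-1)}^{p/r}$ and $\|\delta^{-(n+1)(\gamma-\alpha/2)}B^\alpha\mu\|_{1/(1-\lambda),(n+1)(\gamma-1)}^{p/r'}$.

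The main obstacle is purely bookkeeping: in part (ii) the weight $\delta$ must be distributed between the two H\"older factors so that, after raising to the conjugate exponents, the $\delta$-powers match the weights $(n+1)(\theta_1-1)$ and $(n+1)(\gamma-1)$ demanded by the two target norms. The requirement that the two split weights sum back to the total exponent of $\delta$ is a single linear equation, and its unique solution is exactly the stated value of $\gamma$ (with $\alpha$ and $\lambda$ fixed as above); verifying this identity, together with the conjugacy of the outer H\"older exponents and the nonnegativity computation in part (i), is where all the arithmetic lives. The only genuinely delicate point is the boundary case $q=p/r$, where $\lambda=1$ and the outer H\"older degenerates into an $L^1$--$L^\infty$ pairing, with $\|\cdot\|_{1/(1-\lambda),\cdot}$ read as an $L^\infty$ norm; this case should be recorded separately.
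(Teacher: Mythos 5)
Your proposal is correct and follows essentially the same route as the paper's own proof: in (i) the pointwise bound from Lemma~\ref{th:frombetatot} applied to $|f|\mu$ combined with \eqref{eq:biS}, followed by H\"older with exponents $q/p$ and its conjugate; in (ii) the inner H\"older in $w$ splitting $|K|^\beta|f|=\bigl(|K|^{t/r}|f|\bigr)|K|^{\alpha/r'}$, the identification of $\int_D|K(z,\cdot)|^\alpha\,d\mu$ with $\delta(z)^{-(n+1)\alpha/2}B^\alpha\mu(z)$ via Lemma~\ref{BKbasic}, and the outer H\"older in $z$ with exponents $qr/p$ and $qr/(qr-p)$, exactly as in the paper. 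The only cosmetic difference is in (i): the paper keeps the entire $\delta$-weight with the $S$-factor and finishes by monotonicity of weighted norms in the exponent, while you split off precisely the target weight $(n+1)(\theta_1-1)p/q$ and check that the leftover exponent is nonnegative (which, as you note, is exactly the hypothesis on $\theta_1$) --- an equivalent bookkeeping choice.
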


\begin{proof}
(i) Lemma~\ref{th:frombetatot}, applied to the measure $|f|\mu$, and \eqref{eq:biS}  yield
\[
\begin{aligned}
|T^\beta_\mu f(z)|^p&\le\left[\int_D |K(z,w)|^\beta |f(w)|\,d\mu(w)\right]^p\\
&\preceq \delta(z)^{-(n+1)(\beta-t)p}\left[\int_D |K(z,w)|^t |f(w)|\,d\mu(w)\right]^p\\
&\preceq \delta(z)^{-(n+1)(\beta-\frac{t}{2}+s)p}|S^{s,1}_{\mu,t}f(z)|^p\;.
\end{aligned}
\]
Therefore using H\"older's inequality we obtain
\[
\begin{aligned}
\|T^\beta_\mu f\|^p_{p,(n+1)(\sigma-1)}&\preceq \int_D |S^{s,1}_{\mu,t}f(z)|^p \delta(z)^{-(n+1)[(\beta-\frac{t}{2}+s)p+1-\sigma]}\,d\nu(z)\\
&\preceq\left[\int_D |S^{s,1}_{\mu,t}f(z)|^q \delta(z)^{-(n+1)[(\beta-\frac{t}{2}+s)q+\frac{(1-\sigma)q}{p}]}\,d\nu(z)\right]^{p/q}\\
&=\|S^{s,1}_{\mu,t}f\|^p_{q,(n+1)q[\frac{\sigma-1}{p}+\frac{t}{2}-\beta-s]}\\
&\preceq \|S^{s,1}_{\mu,t}f\|^p_{q,(n+1)(\theta_1-1)}\;,
\end{aligned}
\]
where the last step follows from \cite[Lemma~2.10]{ARS}.

(ii) Writing $\beta=\frac{t}{r}+\frac{\alpha}{r'}$, using H\"older's inequality and recalling the definition of the Berezin transform we obtain 
\[
\begin{aligned}
|T^\beta_\mu f(z)|^p&\le\left[\int_D |K(z,w)|^t|f(w)|^r\,d\mu(w)\right]^{p/r}\left[\int_D |K(z,w)|^{\alpha}\,d\mu(w)\right]^{p/r'}\\
&\preceq \left[\int_D |K(z,w)|^t|f(w)|^r\,d\mu(w)\right]^{p/r} \delta(z)^{-\frac{(n+1)\alpha p}{2r'}}|B^{\alpha}\mu(z)|^{p/r'}\;.
\end{aligned}
\]
Therefore, recalling that $\alpha/r'=\beta-t/r$ and using again H\"older's inequality, we have
\[
\begin{aligned}
\|T^\beta_\mu f\|^p_{p,(n+1)(\sigma-1)}&\preceq \int_D \left[\int_D |K(z,w)|^t|f(w)|^r\,d\mu(w)\right]^{p/r}|B^\alpha\mu(z)|^{p/r'}\delta(z)^{(n+1)(\sigma-1-\frac{\alpha p}{2r'})}
\,d\nu(z)\\
&\preceq \int_D |S^{s,r}_{\mu, t}f(z)|^{p/r}|B^\alpha \mu(z)|^{p/r'}\delta(z)^{(n+1)p[\frac{\sigma-1}{p}-\frac{\alpha}{2r'}-(s+\frac{t}{2})\frac{1}{r}]}\,d\nu(z)\\
&\le\left[\int_D |S^{s,r}_{\mu,t}f(z)|^q\delta(z)^{(n+1)(\theta_1-1)}\,d\nu(z)\right]^{p/qr}\left[\int_D |B^\alpha\mu(z)|^{\frac{pqr}{r'(qr-p)}}\delta(z)^{(n+1)(\tau-1)}\,d\nu(z)\right]^{1-\frac{p}{qr}}\\
&=\|S^{s,r}_{\mu,t}f\|^{p/r}_{q,(n+1)(\theta_1-1)}\|\delta^{-(n+1)(\gamma-\frac{\alpha}{2})}B^\alpha\mu\|^{p/r'}_{\frac{1}{1-\lambda},(n+1)(\gamma-1)}\;,
\end{aligned}
\]
where $\lambda$ and $\gamma$ are as in the statement and
\[
\tau=\frac{r'}{1-\lambda}\left[\frac{\sigma}{p}-\frac{\theta_1}{qr}-\frac{\alpha}{2r'}-\left(s+\frac{t}{2}\right)\frac{1}{r}\right]\;.
\]
\end{proof}

\begin{corollary}
\label{th:fromStoTfin}
Let $D\subset\subset\C ^n$ be a bounded strongly
pseudoconvex domain, and $\mu$ a positive finite Borel measure on $D$. For $r>1$, $s$, $t>0$, $\tilde p$, $q>0$, $\alpha>0$, $\gamma\in\R$ and $\theta_1>1-\frac{1}{n+1}$
assume that 
\[
\beta=\frac{t}{r}+\frac{\alpha}{r'}\in\N\qquad \mathit{and}\qquad \lambda=1+\frac{r}{\tilde p}-\frac{1}{q}<1\;,
\]
where $r'=r/(r-1)$ is the conjugate exponent of $r$.
Then 
\[
\|T^\beta_\mu f\|_{\tau,(n+1)(\sigma-1)}\preceq \|\delta^{-(n+1)(\gamma-\frac{\alpha}{2})}B^\alpha\mu\|^{1/r'}_{\frac{1}{1-\lambda},(n+1)(\gamma-1)}\|S^{s,r}_{\mu,t}f\|^{1/r}_{q,(n+1)(\theta_1-1)}\;,
\]
where 
\[
\tau=\frac{1}{1-\lambda+\frac{1}{\tilde p}}
\]
and if $\sigma>1-\frac{1}{n+1}$, we have
\begin{equation}
\sigma
=
\tau\left[\frac{\alpha-\lambda\gamma}{r'} + \frac{1}{q r}\left(\theta_1+ q\left(s+\frac{t}{2}\right)\right)\right]\;.
\label{eq:sigma}
\end{equation}
\end{corollary}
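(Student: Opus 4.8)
The plan is to derive this statement directly from Lemma~\ref{th:fromStoTnew}(ii) by a change of parameters, namely by applying that lemma with $p=\tau$. The Corollary merely recasts the lemma, promoting $\alpha$ and $\gamma$ (together with the auxiliary exponent $\tilde p$) to the role of free parameters in place of $\beta$ and $\sigma$. Consequently essentially all the work consists in checking that the two families of parameter relations agree and that the hypotheses of the lemma are satisfied; there is no new analytic content.

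First I would show that $p=\tau$. Solving the lemma's relation $\lambda=1+r'\bigl[\frac{1}{qr}-\frac{1}{p}\bigr]$ for $1/p$ gives $\frac1p=\frac{1}{qr}+\frac{1-\lambda}{r'}$. On the other hand, eliminating $1/\tilde p$ from the Corollary's relation $\lambda=1+\frac{r}{\tilde p}-\frac1q$ and using $1-\frac1r=\frac1{r'}$ yields $1-\lambda+\frac1{\tilde p}=\frac{1-\lambda}{r'}+\frac1{qr}$, so that indeed $\frac1\tau=1-\lambda+\frac1{\tilde p}=\frac1p$. This single identity is the heart of the bookkeeping and it also shows, since $\lambda<1$ and $\tilde p>0$, that $1/\tau>0$, so that $\tau=p$ is a legitimate exponent.

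Next I would match the remaining data. The constraint $\beta=\frac tr+\frac{\alpha}{r'}$ is exactly the lemma's identity $\alpha=r'\bigl(\beta-\frac tr\bigr)$, so $\alpha$, $\beta$ and $\lambda$ are carried over unchanged. Solving the lemma's formula for $\gamma$ for the unknown $\sigma$ gives $\sigma=p\bigl[\beta+\frac1r(s-\frac t2)+\frac{\theta_1}{qr}-\frac{\lambda\gamma}{r'}\bigr]$; substituting $p=\tau$ and $\beta=\frac tr+\frac{\alpha}{r'}$ and collecting terms produces exactly \eqref{eq:sigma}. These are purely routine algebraic simplifications, so I would not write them out line by line.

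Finally I would verify the two side conditions the lemma requires. Since $\alpha>0$ and $r'>0$ we have $\beta=\frac tr+\frac{\alpha}{r'}>\frac tr$, so $\beta\ge t/r$ holds. Since $\lambda<1$, the identity $\frac1p=\frac{1}{qr}+\frac{1-\lambda}{r'}$ forces $\frac1p>\frac1{qr}$, that is $q>p/r$, so $q\ge p/r$ holds as well. With $\sigma>1-\frac1{n+1}$ assumed and $\lambda<1$ given, Lemma~\ref{th:fromStoTnew}(ii) applies with $p=\tau$ and yields precisely the asserted estimate. The only point demanding any care is confirming that the lemma's side conditions $q\ge p/r$ and $\beta\ge t/r$ are automatic consequences of the Corollary's hypotheses $\lambda<1$ and $\alpha>0$; everything else is a direct reparametrization.
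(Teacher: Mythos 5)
Your proposal is correct and follows essentially the same route as the paper: the paper's own proof also applies Lemma~\ref{th:fromStoTnew}.(ii) with $p=\tau$, verifying that $1+r'\bigl[\frac{1}{qr}-\frac{1}{\tau}\bigr]=\lambda$, that the lemma's formula for $\gamma$ matches \eqref{eq:sigma}, and that $\lambda<1$ forces $q>\tau/r$ (your observations that $\alpha>0$ gives $\beta\ge t/r$ and that $\beta=\frac{t}{r}+\frac{\alpha}{r'}$ is the lemma's $\alpha=r'(\beta-\frac{t}{r})$ are the same bookkeeping). The only difference is presentational: you solve the lemma's relations for $1/p$ and $\sigma$, while the paper substitutes and verifies the identities directly.
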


%
\begin{proof}
%
The assertion is a consequence of Lemma~\ref{th:fromStoTnew}.(ii) applied with $p=\tau$. Indeed,
first of all, since $\lambda<1$, we have $\tilde p>rq$; from this it follows that
\[
\frac{1-r}{\tilde p}>\frac{1-r}{qr}\quad\Longleftrightarrow\quad 1-\lambda+\frac{1}{\tilde p}>\frac{1}{qr} \quad \Longleftrightarrow\quad q>\frac{\tau}{r}
\]
as needed. Furthermore
\[
1+r'\left[\frac{1}{qr}-\frac{1}{\tau}\right]=1+\frac{r'-1}{q}-r'+r'+\frac{rr'}{\tilde p}-\frac{r'}{q}-\frac{r'}{\tilde p}=1+\frac{r}{\tilde p}-\frac{1}{q}=\lambda
\]
and
\[
\frac{r'}{\lambda}\left[\beta+\frac{1}{r}\left(s-\frac{t}{2}\right)+\frac{\theta_1}{qr}-\frac{\sigma}{\tau}\right]=\frac{r'}{\lambda}\left[\beta+\frac{1}{r}\left(s-\frac{t}{2}\right)+\frac{\theta_1}{qr}+\frac{\lambda\gamma-\alpha}{r'}-\frac{1}{qr}\left(\theta_1+q\left(s+\frac{t}{2}\right)\right)\right]=\gamma\;.
\]
\end{proof}

The mapping properties of the operators $T^\beta_\mu$ and $S^{s,r}_{t,\mu}$ can be used to give criteria for a measure $\mu$ to be $(\lambda,\gamma)$-skew Carleson, which is particularly useful when $\lambda<1$. We start with $T^\beta_\mu$:

\begin{proposition}
\label{th:TtoCarl}
Let $D\subset\subset\C^n$ be a bounded strongly pseudoconvex domain, and $\mu$ a positive finite Borel measure on $D$. Take $0<q<p<\infty$,
$\theta_1$, $\theta_2>1-\frac{n}{n+1}$ and $\beta\in\N$ such that
\[
\beta>\frac{1}{p}\max\{1,\theta_1,p-1+\theta_1\}\;.
\]
Put
\[
\lambda=1+\frac{1}{p}-\frac{1}{q}<1\qquad\mathit{and}\qquad \gamma=\frac{1}{\lambda}\left(\beta+\frac{\theta_1}{p}-\frac{\theta_2}{q}\right)\;.
\]
Assume that $T^\beta_\mu$ is bounded from $A^{p}\bigl(D,(n+1)(\theta_1-1)\bigr)$ to $A^{q}\bigl(D,(n+1)(\theta_2-1)\bigr)$, with operator norm $\|T^\beta_\mu\|$. Then $\mu$ is $(\lambda,\gamma)$-skew Carleson, and
\[
\|\delta^{-(n+1)(\gamma-\frac{\alpha}{2})}B^\alpha\mu\|_{\frac{1}{1-\lambda},(n+1)(\gamma-1)}\preceq \|T^\beta_\mu\|
\]
for all $\alpha>\lambda\gamma+\frac{n}{n+1}(1-\lambda)$.
\end{proposition}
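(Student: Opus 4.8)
The plan is to recognize that the desired inequality is, up to the norm equivalences already recorded, exactly the estimate furnished by Theorem~\ref{carthetaCarldue}. Reading that theorem with $\theta=\gamma$, Berezin level $s=\alpha$ and quotient $q/p$ replaced by $\lambda$, its condition (viii) and the norm equivalence \eqref{norme2} identify $\|\delta^{-(n+1)(\gamma-\frac{\alpha}{2})}B^\alpha\mu\|_{\frac{1}{1-\lambda},(n+1)(\gamma-1)}$ with $\|\mu\|_{\lambda,\gamma}$ for every admissible $\alpha$, while the range $\alpha>\lambda\gamma+\frac{n}{n+1}(1-\lambda)$ is precisely the admissibility range for $s$. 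Hence it suffices to bound \emph{one} of the equivalent quantities by $\|T^\beta_\mu\|$; I would aim at the lattice form (v), namely $\|\{\hat\mu_{r,\gamma\lambda}(a_k)\}\|_{\ell^{1/(1-\lambda)}}\preceq\|T^\beta_\mu\|$, after checking (from the lower bound on $\beta$) that $\gamma>1-\frac{1}{n+1}$, so that Theorem~\ref{carthetaCarldue} applies, and that the same bound on $\beta$ makes Lemma~\ref{th:fact4} available with exponent $p$ and weight parameter $\theta_1$.

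To produce that estimate I would test $T^\beta_\mu$ on randomized atomic functions. Fix an $r$-lattice $\{a_k\}$, set $\tau=(n+1)(\frac{\beta}{2}-\frac{\theta_1}{p})$ and $f_{a_k}=\delta(a_k)^\tau k_{a_k}^\beta$ as in Lemma~\ref{th:fact4}, and for $\mathbf{c}=\{c_k\}\in\ell^p$ and a choice of signs $\eps=\{\eps_k\}$ put $f_\eps=\sum_k\eps_k c_k f_{a_k}$. By Lemma~\ref{th:fact4}, $\|f_\eps\|_{p,(n+1)(\theta_1-1)}\preceq\|\mathbf{c}\|_p$ uniformly in $\eps$, so the assumed boundedness gives $\|T^\beta_\mu f_\eps\|_{q,(n+1)(\theta_2-1)}^q\preceq\|T^\beta_\mu\|^q\|\mathbf{c}\|_p^q$. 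Averaging over the signs and applying Khinchine's inequality inside the integral, $\mathbb{E}_\eps|\sum_k\eps_k c_k T^\beta_\mu f_{a_k}(z)|^q$ becomes comparable to $(\sum_k|c_k|^2|T^\beta_\mu f_{a_k}(z)|^2)^{q/2}$, yielding
\[
\int_D\Bigl(\sum_k|c_k|^2\,|T^\beta_\mu f_{a_k}(z)|^2\Bigr)^{q/2}\delta(z)^{(n+1)(\theta_2-1)}\,d\nu(z)\preceq\|T^\beta_\mu\|^q\|\mathbf{c}\|_p^q.
\]
The point of the randomization is to defeat the cancellation coming from the oscillating kernel $K(z,w)^\beta$ in the sum over the lattice; this is the first half of the main difficulty.

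The second half is to bound the left-hand side from below by a genuinely positive lattice quantity. Keeping only the $j$-th term, $(\sum_k\cdots)^{q/2}\ge|c_j|^q|T^\beta_\mu f_{a_j}|^q$, and summing the resulting integrals over the balls $B_D(a_j,R)$ (with $R=\frac12(1+r)$) costs only the finite overlap constant $m$ of the lattice. On each such ball $|T^\beta_\mu f_{a_j}|^q$ is plurisubharmonic, so the submean estimate of Lemma~\ref{due} replaces its integral by the value at the center $|T^\beta_\mu f_{a_j}(a_j)|^q$, up to $\nu(B_D(a_j,r))\approx\delta(a_j)^{n+1}$ and $\delta\approx\delta(a_j)$ on the ball (Lemmas~\ref{sei} and~\ref{sette}). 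The crucial observation, which avoids any control on the argument of the Bergman kernel, is that the center value is manifestly nonnegative: since $K(w,a_j)=\overline{K(a_j,w)}$,
\[
T^\beta_\mu f_{a_j}(a_j)=\delta(a_j)^\tau K(a_j,a_j)^{-\beta/2}\int_D |K(a_j,w)|^{2\beta}\,d\mu(w)\ge 0,
\]
and restricting the integral to $B_D(a_j,r)$ and using Lemmas~\ref{piu} and~\ref{BKbasic} gives $T^\beta_\mu f_{a_j}(a_j)\succeq\delta(a_j)^{\tau-\frac{3}{2}(n+1)\beta}\mu(B_D(a_j,r))$.

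Assembling these bounds produces
\[
\sum_j|c_j|^q\,\mu(B_D(a_j,r))^q\,\delta(a_j)^{(n+1)[\theta_2-q\beta-q\theta_1/p]}\preceq\|T^\beta_\mu\|^q\|\mathbf{c}\|_p^q,
\]
where the exponent of $\delta(a_j)$ is exactly $-q(n+1)\gamma\lambda$ precisely because $\gamma\lambda=\beta+\frac{\theta_1}{p}-\frac{\theta_2}{q}$; by Lemma~\ref{sei} the summand is therefore $\approx|c_j|^q\,\hat\mu_{r,\gamma\lambda}(a_j)^q$. Writing $t_j=|c_j|^q$, so that $\|\mathbf{c}\|_p^q=\|\{t_j\}\|_{\ell^{p/q}}$, and letting $\{t_j\}$ range over the unit ball of $\ell^{p/q}$, duality of $\ell^{p/q}$ identifies the supremum of the left-hand side with $\|\{\hat\mu_{r,\gamma\lambda}(a_j)^q\}\|_{\ell^{(p/q)'}}$, where $(p/q)'=\frac{p}{p-q}$. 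Since $\frac{1}{1-\lambda}=\frac{pq}{p-q}$, taking $q$-th roots gives exactly $\|\{\hat\mu_{r,\gamma\lambda}(a_k)\}\|_{\ell^{1/(1-\lambda)}}\preceq\|T^\beta_\mu\|$, which by Theorem~\ref{carthetaCarldue} means that $\mu$ is $(\lambda,\gamma)$-skew Carleson with $\|\mu\|_{\lambda,\gamma}\preceq\|T^\beta_\mu\|$; the stated bound for every $\alpha$ in the given range then follows from the norm equivalence \eqref{norme2}. The main obstacle, as indicated, is the twofold cancellation in the kernel, handled respectively by the random signs together with Khinchine's inequality (across the lattice) and by the center-evaluation identity $K(a_j,w)^\beta K(w,a_j)^\beta=|K(a_j,w)|^{2\beta}$ (within a single ball).
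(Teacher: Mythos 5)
Your proposal is correct and takes essentially the same route as the paper's own proof: the same atomic test functions $f_{a_k}=\delta(a_k)^\tau k_{a_k}^\beta$ from Lemma~\ref{th:fact4} randomized by signs (the paper phrases this via Rademacher functions), Khinchine's inequality, the finite-overlap lower bound (which the paper spells out in the two cases $q\ge 2$ and $0<q<2$), the submean estimate of Lemma~\ref{due} at lattice centers, the positivity identity $K(a_k,w)^\beta k_{a_k}(w)^\beta=|K(a_k,w)|^{2\beta}K(a_k,a_k)^{-\beta/2}$ together with Lemmas~\ref{BKbasic} and~\ref{piu}, and $\ell^{p/q}$-duality. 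The only cosmetic difference is that you conclude via condition (v) of Theorem~\ref{carthetaCarldue} while the paper uses the equivalent condition (vi), and both arguments then read off the Berezin-transform bound from \eqref{norme2}.
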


\begin{proof}
Let $\{a_k\}$ be an $r$-lattice in $D$, and $\{r_k\}$ a sequence of Rademacher functions (see \cite[Appendix A]{Duren}). 
Set
\[
\tau=(n+1)\left[\frac{\beta}{2}-\frac{\theta_1}{p}\right]\;,
\]
and, for every $a\in D$, put $f_a=\delta(a)^\tau k_a^\beta$. Then Lemma~\ref{th:fact4} implies that
\[
f_t=\sum_{k=0}^\infty c_kr_k(t)f_{a_k}
\]
belongs to $A^{p}\bigl(D,(n+1)(\theta_1-1)\bigr)$ for all $\mathbf{c}=\{c_k\}\in\ell^{p}$, and $\|f\|_{p,(n+1)(\theta_1-1)}\preceq\|\mathbf{c}\|_{p}$.

Since, by assumption, $T^\beta_\mu$ is bounded from $A^{p}\bigl(D,(n+1)(\theta_1-1)\bigr)$ to $A^{q}\bigl(D,(n+1)(\theta_2-1)\bigr)$ we have
\begin{equation*}\begin{aligned}
\|T^\beta_\mu f_t\|^{q}_{q,(n+1)(\theta_2-1)}
&=\int_D\left|\sum_{k=0}^\infty c_k r_k(t) T^\beta_\mu f_{a_k}(z)\right|^{q}\delta(z)^{(n+1)(\theta_2-1)}\,d\nu(z)\\
&\le\|T^\beta_\mu\|^{q}\|f_t\|^{q}_{p,(n+1)(\theta_1-1)}\preceq \|T^\beta_\mu\|^{q}\|\mathbf{c}\|^{q}_{p}\;.
\end{aligned}\end{equation*}
Integrating both sides on $[0,1]$ with respect to $t$ and using Khinchine's inequality (see, e.g., \cite{Luecking}) we obtain
\[
\int_D\left(\sum_{k=0}^\infty |c_k|^2|T^\beta_\mu f_{a_k}(z)|^2\right)^{q/2}\delta(z)^{(n+1)(\theta_2-1)}\,d\nu(z)\preceq \|T^\beta_\mu\|^{q}\|\mathbf{c}\|^{q}_{p}\;.
\]
Set $B_k=B_D(a_k,r)$. 
We have to consider two cases: $q\ge 2$ and $0<q<2$. 

If $q\ge 2$, using the fact that $\|\mathbf{a}\|_{q/2}\le\|\mathbf{a}\|_1$ for every $\mathbf{a}\in\ell^1$ we get
\begin{equation*}\begin{aligned}
\sum_{k=0}^\infty |c_k|^{q}\int_{B_k}|&T^\beta_\mu f_{a_k}(z)|^{q}\delta(z)^{(n+1)(\theta_2-1)}\,d\nu(z)\\
&\le
\int_D\left(\sum_{k=0}^\infty |c_k|^2|T^\beta_\mu f_{a_k}(z)|^2 \chi_{B_k}(z)\right)^{q/2}\delta(z)^{(n+1)(\theta_2-1)}\,d\nu(z)\\
&\le \int_D\left(\sum_{k=0}^\infty |c_k|^2|T^\beta_\mu f_{a_k}(z)|^2\right)^{q/2}\delta(z)^{(n+1)(\theta_2-1)}\,d\nu(z)\;.
\end{aligned}\end{equation*}
If instead $0<q<2$, using H\"older's inequality, we obtain
\begin{equation*}
\begin{aligned}
\sum_{k=0}^\infty |c_k|^{q}\int_{B_k}&|T^\beta_\mu f_{a_k}(z)|^{q}\delta(z)^{(n+1)(\theta_2-1)}\,d\nu(z)\\
&\le
\int_D\left(\sum_{k=0}^\infty |c_k|^2|T^\beta_\mu f_{a_k}(z)|^2\right)^{\frac{q}{2}}\left(\sum_{k=0}^\infty\chi_{B_k}(z)\right)^{1-\frac{q}{2}}\delta(z)^{(n+1)(\theta_2-1)}\,d\nu(z)\\
&\preceq\int_D\left(\sum_{k=0}^\infty |c_k|^2|T^\beta_\mu f_{a_k}(z)|^2\right)^{q/2}\delta(z)^{(n+1)(\theta_2-1)}\,d\nu(z)\;,
\end{aligned}
\end{equation*}
where we used the fact that each $z\in D$ belongs to no more than $m$ of the $B_k$. 

Summing up, for any $q>0$ we have
\[
\sum_{k=0}^\infty |c_k|^{q}\int_{B_k}|T^\beta_\mu f_{a_k}(z)|^{q}\delta(z)^{(n+1)(\theta_2-1)}\,d\nu(z)\preceq \|T^\beta_\mu\|^{q}\|\mathbf{c}\|^{q}_{p}\;.
\]
Now Lemmas~\ref{sei}, \ref{sette} and~\ref{due} (see also \cite[Corollary~2.7]{AS}) yield
\[
|T^\beta_\mu f_{a_k}(a_k)|^{q}\preceq \delta(a_k)^{-(n+1)\theta_2} \int_{B_k} |T^\beta_\mu f_{a_k}(z)|^{q}\delta(z)^{(n+1)(\theta_2-1)}\,d\nu(z)\;,
\]
and so we get
\[
\sum_{k=0}^\infty |c_k|^{q}\delta(a_k)^{(n+1)\theta_2}|T^\beta_\mu f_{a_k}(a_k)|^{q}\preceq \|T^\beta_\mu\|^{q}\|\mathbf{c}\|^{q}_{p}\;.
\]
On the other hand, using Lemmas~\ref{BKbasic} and~\ref{piu}, we obtain
\begin{equation*}\begin{aligned}
T^\beta_\mu f_{a_k}(a_k)
&=\delta(a_k)^\tau \int_D K(a_k,w)^\beta k_{a_k}(w)^\beta\,d\mu(w)\succeq\delta(a_k)^{\tau+(n+1)\frac{\beta}{2}}\int_D |K(a_k,w)|^{2\beta} \,d\mu(w)\\
&\ge \delta(a_k)^{\tau+(n+1)\frac{\beta}{2}}\int_{B_D(a_k,r)} |K(a_k,w)|^{2\beta}\,d\mu(w)\\
&\succeq \delta(a_k)^{\tau-(n+1)\frac{3\beta}{2}}\mu\bigl(B_D(a_k,r)\bigr)=\delta(a_k)^{-(n+1)[\beta+\frac{\theta_1}{p}]}\mu\bigl(B_D(a_k,r)\bigr)\;.
\end{aligned}\end{equation*}
Putting all together we get
\[
\sum_{k=0}^\infty |c_k|^{q}\left(\frac{\mu\bigl(B_D(a_k,r)\bigr)}{\delta(a_k)^{(n+1)\lambda\gamma}}\right)^{q}\preceq \|T^\beta_\mu\|^{q}\|\mathbf{c}\|^{q}_{p}\;.
\]
Set $\mathbf{d}=\{d_k\}$, where
\[
d_k=\frac{\mu\bigl(B_D(a_k,r)\bigr)}{\delta(a_k)^{(n+1)\lambda\gamma}}\;.
\]
Then by duality we get $\{d_k^{q}\}\in\ell^{p/(p-q)}$ with $\|\{d_k^{q}\}\|_{p/(p-q)}\preceq \|T^\beta_\mu\|^{q}$, because $p/(p-q)$ is the conjugate exponent of $p/q>1$. This means that $\mathbf{d}\in\ell^{pq/(p-q)}=\ell^{1/(1-\lambda)}$ with 
\[
\|\mathbf{d}\|_{\frac{1}{1-\lambda}}\preceq \|T^\beta_\mu\|\;.
\]
Since
\[
d_k \approx \hat\mu_r(a_k)\delta(a_k)^{(n+1)(1-\lambda\gamma)}\;,
\]
the assertion then follows from Theorem~\ref{carthetaCarldue}.
\end{proof}

\begin{remark}
Note that a similar result holds also for $\lambda\ge 1$ and can be strengthened to give yet another characterization of skew Carleson measures. Since such result is not needed in the present paper, we prefer to omit it here, and to present it in a forthcoming paper.
\end{remark}

We can now prove a technical result involving the operators $S^{s,r}_{\mu,t}$ that will be crucial for the proof of our main theorem.

\begin{proposition}
\label{lemmatretre}
Let $D\subset\subset\C ^n$ be a bounded strongly
pseudoconvex domain, and $\mu$ a positive finite Borel measure on $D$. Fix $q>1$, $p>0$, $\theta_1$, $\theta_2>1-\frac{1}{n+1}$ and $r,s>0$, and $t>\frac{1}{p}\max\left\{1,\theta_2,p-1+\theta_2\right\}>0$. Set
\[
\lambda=1+\frac{r}{p}-\frac{1}{q}
\qquad\mathrm{and}\qquad 
\gamma=\frac{1}{\lambda}
\left(\frac{t}{2}+\frac{\theta_2 r}{p}-\frac{\theta_1}{q}-s\right)\;.
\] 
Assume that $\lambda>0$ and $\gamma>1-\frac{1}{n+1}$, and that there exists $K>0$ such that 
\begin{equation}
\|S^{s,r}_{\mu,t}f\|_{q,(n+1)(\theta_1-1)}\le K\|f\|^r_{p,(n+1)(\theta_2-1)}
\label{eqltretreu}
\end{equation}
for all $f\in A^p\bigl(D, (n+1)(\theta_2-1)\bigr)$. Then $\mu$ is a $(\lambda,\gamma)$-skew Carleson measure with $\|\mu\|_{\lambda,\mu}\preceq K$.
\end{proposition}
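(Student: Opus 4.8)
The plan is to deduce $(\lambda,\gamma)$-skew Carlesonness from its geometric lattice characterizations (Theorems~\ref{carthetaCarluno} and~\ref{carthetaCarldue}) by testing the hypothesis~\eqref{eqltretreu} against functions built out of the normalized Bergman kernel. First I would fix $\rho\in(0,1)$ and a $\rho$-lattice $\{a_k\}$, choose an integer $\beta$ large enough for Lemma~\ref{th:fact4} (applied with $\theta=\theta_2$), set $\tau=(n+1)\bigl[\frac{\beta}{2}-\frac{\theta_2}{p}\bigr]$ and $f_a=\delta(a)^\tau k_a^\beta$. For $\mathbf{c}=\{c_k\}\in\ell^p$ and a sequence of Rademacher functions $\{\eps_k\}$, I set $f_t=\sum_k c_k\,\eps_k(t)\,f_{a_k}$, so that Lemma~\ref{th:fact4} gives $\|f_t\|_{p,(n+1)(\theta_2-1)}\preceq\|\mathbf{c}\|_p$ uniformly in $t$. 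The randomization is forced on us because $S^{s,r}_{\mu,t}$ is nonlinear in $f$, so the clean linear trick used for $T^\beta_\mu$ in Proposition~\ref{th:TtoCarl} is unavailable.

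The first technical ingredient is a lower bound for $S^{s,r}_{\mu,t}f_t$ localized on the Kobayashi balls $B_j=B_D(a_j,\rho)$. By Lemmas~\ref{sette} and~\ref{piu}, whenever $z,w$ lie in a common ball one has $\delta(z)\approx\delta(a_j)$, $|k_z(w)|^t\approx\delta(a_j)^{-(n+1)t/2}$ and $|k_{a_j}(w)|^{\beta r}\approx\delta(a_j)^{-(n+1)\beta r/2}$. Averaging in $t$ and invoking Khinchine's inequality replaces $|f_t(w)|^r$ by $\bigl(\sum_k|c_k|^2|f_{a_k}(w)|^2\bigr)^{r/2}$, which I bound below by its single $j$-th term; keeping only the contribution of $w\in B_j$ then yields, after the $\beta$-dependence cancels, $\int_0^1 S^{s,r}_{\mu,t}f_t(z)\,dt\succeq|c_j|^r\,\delta(a_j)^{(n+1)(s-\frac{r\theta_2}{p}-\frac{t}{2})}\mu(B_j)$ for $z\in B_j$. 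The point of the bookkeeping is that $s-\frac{r\theta_2}{p}-\frac{t}{2}+\frac{\theta_1}{q}=-\lambda\gamma$ by the definitions of $\lambda$ and $\gamma$, so this exponent is exactly the one needed to recover the Carleson condition.

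Next I would run the two sides against each other. Raising~\eqref{eqltretreu} to the power $q$ and integrating in $t\in[0,1]$, together with $\|f_t\|_{p,(n+1)(\theta_2-1)}\preceq\|\mathbf{c}\|_p$, gives $\int_0^1\|S^{s,r}_{\mu,t}f_t\|_{q,(n+1)(\theta_1-1)}^q\,dt\preceq K^q\|\mathbf{c}\|_p^{rq}$. For the reverse estimate I use bounded overlap of the lattice balls to sum the localized integrals over $j$, Jensen's inequality $\bigl(\int_0^1 S^{s,r}_{\mu,t}f_t\,dt\bigr)^q\le\int_0^1\bigl(S^{s,r}_{\mu,t}f_t\bigr)^q\,dt$ (this is where $q>1$ enters), and the submean estimate of Lemma~\ref{due} to pass from the integral over $B_j$ to the value at the center. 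Writing $d_k=\mu(B_k)\,\delta(a_k)^{-(n+1)\lambda\gamma}\approx\hat\mu_\rho(a_k)\,\delta(a_k)^{(n+1)(1-\lambda\gamma)}$, these combine into the master inequality $\sum_k|c_k|^{rq}d_k^{\,q}\preceq K^q\|\mathbf{c}\|_p^{rq}$, valid for every $\mathbf{c}\in\ell^p$.

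The conclusion then comes by duality, substituting $b_k=|c_k|^r$ (as in the proof of Proposition~\ref{th:TtoCarl}): the master inequality says that multiplication by $\{d_k\}$ maps $\ell^{p/r}$ into $\ell^q$ with norm $\preceq K$. When $\lambda<1$ we have $p/r>q$, and the standard multiplier/conjugate-exponent argument gives $\{d_k\}\in\ell^{1/(1-\lambda)}$ with $\|\{d_k\}\|_{1/(1-\lambda)}\preceq K$, which is precisely the lattice condition of Theorem~\ref{carthetaCarldue} for a $(\lambda,\gamma)$-skew Carleson measure; when $\lambda\ge1$ we have $p/r\le q$ and the multiplier must instead be uniformly bounded, $d_k\preceq K$, which is the lattice condition of Theorem~\ref{carthetaCarluno}. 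Either way $\mu$ is $(\lambda,\gamma)$-skew Carleson with $\|\mu\|_{\lambda,\gamma}\preceq K$. I expect the main obstacle to be exactly the Khinchine-plus-Jensen maneuver of the third paragraph: because $S^{s,r}_{\mu,t}$ is nonlinear one cannot simply pull the coefficients out, and the whole argument hinges on transferring the $L^q$-norm hypothesis into an honest pointwise lower bound weighted by $|c_k|^r$ while losing only controllable constants.
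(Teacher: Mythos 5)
Your proof is correct, and it takes a genuinely different --- and more unified --- route than the paper's. The paper splits into the cases $\lambda\ge 1$ and $0<\lambda<1$: for $\lambda\ge 1$ it tests \eqref{eqltretreu} directly on single kernel powers $f_a^\sigma=k_a^\sigma$ (no lattice, no randomization), while for $0<\lambda<1$ it never randomizes $S^{s,r}_{\mu,t}$ at all; instead it transfers \eqref{eqltretreu} into boundedness of the Toeplitz operator $T^\beta_\mu$ (via Lemma~\ref{th:fromStoTnew} and Corollary~\ref{th:fromStoTfin}) and then invokes Proposition~\ref{th:TtoCarl}, where the Rademacher--Khinchine--duality scheme is run on the \emph{linear} operator $T^\beta_\mu$. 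This forces three subcases ($r=1$; $r>1$, with a compact-support approximation, a bootstrap self-improvement of the Berezin norm, and a limiting argument; and $0<r<1$, via the auxiliary measure $\delta^A\mu$ and Lemma~\ref{th:last}). Your observation that the randomization can be applied directly to $S^{s,r}_{\mu,t}$ --- because $S$ sees $f$ only through $|f|^r$ inside a positive integral, so Tonelli moves the average over the Rademacher parameter inside, and Khinchine applies pointwise in $w$ for every $r>0$ --- collapses all of this into one argument: positivity makes the nonlinearity harmless, Jensen (here is where $q>1$ enters, exactly as you note) converts the $L^q$ hypothesis into the master inequality $\sum_k|c_k|^{rq}d_k^{\,q}\preceq K^q\|\mathbf{c}\|_p^{rq}$, and the multiplier/duality step yields condition (v) of Theorem~\ref{carthetaCarluno} when $\lambda\ge 1$ and condition (vi) of Theorem~\ref{carthetaCarldue} when $\lambda<1$; your exponent bookkeeping $s-\frac{r\theta_2}{p}-\frac{t}{2}+\frac{\theta_1}{q}=-\lambda\gamma$ is exactly right. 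What the paper's longer route buys is the intermediate mapping properties of $T^\beta_\mu$ (Lemma~\ref{th:fromStoTnew}, Corollary~\ref{th:fromStoTfin}, Proposition~\ref{th:TtoCarl}), which are of independent interest and are reused elsewhere; what your route buys is uniformity in $r$ (Khinchine needs only $r>0$, so the delicate $0<r<1$ case disappears) and the elimination of the compact-support, limiting, and bootstrap arguments.

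Two small caveats. First, the two-sided estimate $|k_z(w)|^t\approx\delta(a_j)^{-(n+1)t/2}$ for $z,w\in B_D(a_j,\rho)$ comes from Lemma~\ref{piu} and is available only when $\delta(a_j)$ is below the threshold $\delta_{\hat\rho}$; as in the paper's own proof, the lattice balls far from $\partial D$ (of which there are only boundedly many, by bounded overlap and volume comparison) must be disposed of separately, using finiteness of $\mu$ --- and, if one wants the sharp bound $\|\mu\|_{\lambda,\gamma}\preceq K$ rather than $\preceq\max\{K,\mu(D)\}$, a separate test (e.g.\ $f\equiv 1$) to check $\mu(\{\delta\ge\delta_1\})\preceq K$. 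Second, you use the same letter $t$ for the kernel exponent in $S^{s,r}_{\mu,t}$ and for the Rademacher variable; this is only notational, but it should be fixed.
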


\begin{proof}
Let us first consider the case $\lambda\ge 1$. Given $a\in D$ and $\sigma\in\N$ such that
\[
p\sigma>\theta_2,
\]
set
\[
f_a^\sigma(z)=k_a(z)^\sigma\;,
\]
for $z\in D$. 
By Proposition~\ref{intest} we have
\begin{equation}
\|f_a^\sigma\|^r_{p,(n+1)(\theta_2-1)}\preceq \delta(a)^{(n+1)(\theta_2\frac{r}{p}-\frac{r\sigma}{2})}\;.
\label{eq:mfs}
\end{equation}
Now fix $\rho>0$. Clearly, there is a $\hat\rho>0$ depending only on~$\rho$ such that $z$, $w\in B_D(a,\rho)$ implies $w\in B_D(z,\hat\rho)$ for all $a\in D$. 
By Lemma~\ref{sette} we can find $\delta_1>0$ such that if $\delta(a)<\delta_1$ then $\delta(z)<\delta_{\hat\rho}$ for all $z\in B_D(a,\rho)$, where $\delta_{\hat\rho}>0$ is given by Lemma~\ref{piu}. Then if $\delta(a)<\delta_1$ using Lemmas~\ref{sei}, \ref{sette} and \ref{piu} we have
\begin{equation*}\begin{aligned}
\|S^{s,r}_{\mu,t}f^\sigma_a\|^q_{q,(n+1)(\theta_1-1)}
&=\int_D |S^{s,r}_{\mu,t}f^\sigma_a(z)|^q\delta(z)^{(n+1)(\theta_1-1)}\, d\nu(z)\\
&\ge\int_{B_D(a,\rho)} |S^{s,r}_{\mu,t}f^\sigma_a(z)|^q\delta(z)^{(n+1)(\theta_1-1)}\, d\nu(z)\\
&\succeq\delta(a)^{(n+1)(\theta_1-1)}\int_{B_D(a,\rho)}\delta(z)^{(n+1)qs}\left[\int_D |k_z(w)|^t|f_a^\sigma(w)|^r\,d\mu(w)\right]^q d\nu(z)\\
&\succeq\delta(a)^{(n+1)(\theta_1-1+qs)}\int_{B_D(a,\rho)}\left[\int_{B_D(a,\rho)}
|k_z(w)|^t|f_a^\sigma(w)|^r\,d\mu(w)\right]^q d\nu(z)\\
&\succeq\delta(a)^{(n+1)(\theta_1-1+qs-\frac{1}{2}\sigma rq)}\int_{B_D(a,\rho)}\delta(z)^{\frac{n+1}{2}tq}\left[\int_{B_D(a,\rho)}|K(z,w)|^t\,d\mu(w)\right]^q d\nu(z)\\
&\succeq\delta(a)^{(n+1)(\theta_1-1+qs-\frac{1}{2}\sigma rq+\frac{1}{2}tq)}
\int_{B_D(a,\rho)}\delta(z)^{-(n+1)tq}\mu\bigl(B_D(a,\rho)\bigr)^q\,d\nu(z)\\
&\succeq\delta(a)^{(n+1)(\theta_1+qs-\frac{1}{2}\sigma rq-\frac{1}{2}tq)}\mu\bigl(B_D(a,\rho)\bigr)^q\;.
\end{aligned}\end{equation*}
Recalling \eqref{eqltretreu} and \eqref{eq:mfs}, we get
\[
\begin{aligned}
\mu\bigl(B_D(a,\rho)\bigr) 
&\preceq K\delta(a)^{(n+1)(\frac{\sigma r}{2}+\frac{t}{2}-s-\frac{\theta_1}{q})}\|f_a^\sigma\|_{p,(n+1)(\theta_2-1)}^r\\
&\preceq K\delta(a)^{(n+1)(\frac{\sigma r}{2}+\frac{t}{2}-s-\frac{\theta_1}{q}
+\theta_2\frac{r}{p}-\frac{r\sigma}{2})}\\
&\preceq K\nu\bigl(B_D(a,\rho)\bigr)^{\frac{t}{2}-s-\frac{\theta_1}{q}
+\theta_2\frac{r}{p}}\;.
\end{aligned}
\]
Since $\mu$ is a finite measure, a similar estimate holds when $\delta(a)\ge\delta_1$. 
Then Theorem~\ref{carthetaCarluno} implies that $\mu$ is $(\lambda,\gamma)$-skew Carleson 
with $\|\mu\|_{\lambda,\gamma}\preceq K$ as claimed.

Now let us assume $0<\lambda<1$. Assume first $r=1$. Choose $\beta\in\mathbb{N}$ with $\beta\ge t$ and set
\[
\sigma=\theta_1+q\left(\beta-\frac{t}{2}+s\right)>1-\frac{1}{n+1}\;.
\]
We can apply \eqref{eqltretreu} and Lemma~\ref{th:fromStoTnew}.(i) with $p=q$ to get
\[
\|T^\beta_\mu f\|_{q,(n+1)(\sigma-1)}\preceq K \|f\|_{p,(n+1)(\theta_2-1)}\;.
\]
Therefore Proposition~\ref{th:TtoCarl} implies that $\mu$ is $(\lambda,\tilde\gamma)$-skew Carleson with
\[
\tilde\gamma=\frac{1}{\lambda}\left(\beta+\frac{\theta_2}{p}-\frac{\sigma}{q}\right)=\frac{1}{\lambda}\left(\frac{t}{2}+\frac{\theta_2}{p}-\frac{\theta_1}{q}-s\right)=\gamma\;,
\]
and $\|\mu\|_{\lambda,\gamma}\preceq K$ as claimed.

Assume now $r>1$, and choose $\alpha>0$ so that
\[
\beta=\frac{t}{r}+\frac{\alpha}{r'}>\frac{1}{p}\max\left\{1,\theta_2,p-1+\theta_2\right\}
\]
and $\beta\in\N$. We also require that 
$\alpha$ is such that $\alpha>\lambda\gamma+\frac{n}{n+1}(1-\lambda)$ and
\[
\sigma:=\tau\left[\frac{\alpha-\lambda\gamma}{r'}+\frac{1}{qr}\left(\theta_1+q\left(s+\frac{t}{2}\right)\right)\right]>1-\frac{1}{n+1}\;,
\]
where 
\[
\tau=\frac{1}{1-\lambda+\frac{1}{p}}\;.
\]
Assume for a moment that $\mu$ has compact support. Then $\|\delta^{-(n+1)(\gamma-\frac{\alpha}{2})}B^\alpha\mu\|_{\frac{1}{1-\lambda},(n+1)(\gamma-1)}$ is finite;
therefore \eqref{eqltretreu} and Corollary~\ref{th:fromStoTfin} applied with $\tilde p=p$ imply that $T^\beta_\mu$ is bounded from $A^p\bigl(D,(n+1)(\theta_2-1)\bigr)$ to $A^\tau\bigl(D,(n+1)(\sigma-1)\bigr)$,
with 
\[
\|T^\beta_\mu\|\preceq K\|\delta^{-(n+1)(\gamma-\frac{\alpha}{2})}B^\alpha\mu\|_{\frac{1}{1-\lambda},(n+1)(\gamma-1)}^{1/r'}\;.
\]
Proposition~\ref{th:TtoCarl} then yields that $\mu$ is $(\tilde\lambda,\tilde\gamma)$-skew Carleson with
\[
\tilde\lambda=1+\frac{1}{p}-\frac{1}{\tau}=\lambda
\]
and
\[
\begin{aligned}
\tilde\gamma&=\frac{1}{\lambda}\left(\beta+\frac{\theta_2}{p}-\frac{\sigma}{\tau}\right)=\frac{1}{\lambda}\left(\beta+\frac{\theta_2}{p}-\frac{\alpha}{r'}+\frac{\lambda\gamma}{r'}-
\frac{\theta_1}{qr}-\frac{1}{r}\left(s+\frac{t}{2}\right)\right)\\
&=\frac{1}{\lambda}\left(\frac{t}{2r}+\frac{\theta_2}{p}+\frac{t}{2r'}+\frac{\theta_2 r}{pr'}-\frac{\theta_1}{qr'}-\frac{s}{r'}-\frac{\theta_1}{qr}-\frac{s}{r}\right)\\
&=\frac{1}{\lambda}\left(\frac{t}{2}-\frac{\theta_1}{q}-s+\frac{\theta_2 r}{p}\right)=\gamma\;.
\end{aligned}
\]
Furthermore, we also have
\[
 \|\delta^{-(n+1)(\gamma-\frac{\alpha}{2})}B^\alpha\mu\|_{\frac{1}{1-\lambda},(n+1)(\gamma-1)}\preceq K \|\delta^{-(n+1)(\gamma-\frac{\alpha}{2})}B^\alpha\mu\|_{\frac{1}{1-\lambda},(n+1)(\gamma-1)}^{1/r'}
\]
and thus
\[
 \|\delta^{-(n+1)(\gamma-\frac{\alpha}{2})}B^\alpha\mu\|_{\frac{1}{1-\lambda},(n+1)(\gamma-1)}\preceq K\;.
\]
An easy limit argument then shows that this holds even when the support of $\mu$ is not compact, and then, by Theorem~\ref{carthetaCarldue}, $\mu$ is $(\lambda,\gamma)$-skew Carleson 
with $\|\mu\|_{\lambda,\gamma}\preceq K$.

We are left with the case $0<r<1$. Choose $R>1$ and set $\mu^*=\delta^A\mu$, with
\[
A=(n+1)\frac{(R-r)\theta_2}{p}\;.
\]
First of all, fix $r_0\in(0,1)$ and set $R_0=\frac{1}{2}(1+r_0)$. Then, for any $z\in D$, Lemmas~\ref{sei}, \ref{sette} and \ref{due} yield
\[
\begin{aligned}
|f(z)|^p&\preceq \frac{1}{\nu\bigl(B_D(z,r_0)\bigr)}\int_{B_D(z,R_0)}|f(w)|^p\,d\nu(w)
\preceq \delta(z)^{-(n+1)\theta_2}\int_{B_D(z,R_0)}|f(w)|^p\delta(w)^{(n+1)(\theta_2-1)}\,d\nu(w)\\
&\le \delta(z)^{-(n+1)\theta_2}\|f\|^p_{p,(n+1)(\theta_2-1)}\;.
\end{aligned}
\]
Then \eqref{eqltretreu} yields
%
\begin{equation*}
\begin{aligned}
\|S^{s,R}_{\mu^*,t}&f\|_{q,(n+1)(\theta_1-1)}^q\\
&=\int_D\left(\delta(z)^{(n+1)s}\int_D |k_z(w)|^t |f(w)|^r |f(w)|^{R-r}\,d\mu^*(w)
\right)^q \delta(z)^{(n+1)(\theta_1-1)}\,d\nu(z)\\
&\preceq\|f\|^{q(R-r)}_{p,(n+1)(\theta_2-1)}\int_D\!\!\!\left(\!\delta(z)^{(n+1)s}\!\!\int_D\!\! |k_z(w)|^t |f(w)|^r \delta(w)^{A-(n+1)\frac{(R-r)\theta_2}{p}}d\mu(w)\!\right)^q
\!\!\!\delta(z)^{(n+1)(\theta_1-1)}d\nu(z)\\
&=\|f\|^{q(R-r)}_{p,(n+1)(\theta_2-1)}\|S^{s,r}_{\mu,t}f\|^q_{q,(n+1)(\theta_1-1)}\\
&\le K \|f\|^{qR}_{p,(n+1)(\theta_2-1)}
\end{aligned}
\end{equation*}
for all $f\in A^p\bigl(D,(n+1)(\theta_2-1)\bigr)$. Arguing as before, we can prove that 
$\mu^*$ is $(\lambda,\gamma^*)$-skew Carleson with $\|\mu^*\|_{\lambda,\gamma^*}\preceq K$, where 
\[
\gamma^*=\frac{1}{\lambda}
\left(\frac{t}{2}+\frac{\theta_2R}{p}-\frac{\theta_1}{q}-s\right)=\gamma+\frac{(R-r)\theta_2}{\lambda p}\;.
\]
But $\mu=\delta^{-(n+1)\frac{(R-r)\theta_2}{p}}\mu^*$; then Lemma~\ref{th:last} implies that $\mu$ is
$(\lambda,\gamma)$-skew Carleson with $\|\mu\|_{\lambda,\gamma}\preceq K$, and we are done.
\end{proof}

We finally have all the ingredients to prove our main result.

\begin{proof}[Proof of Theorem \ref{maintheorem}]
Assume that $\mu$ is $(\lambda,\gamma)$-skew Carleson. For $k=1$ the assertion is just the definition of $(\lambda,\gamma)$-skew Carleson; so we can assume $k\ge 2$.

For $j=1,\ldots,k$ put $\beta_j=\lambda\frac{p_j}{q_j}$. Then we have $\beta_j>1$, $\frac{q_j}{p_j}\beta_j=\lambda$, 
and
\[
\sum_{j=1}^k \frac{1}{\beta_j}=1\;.
\]
%
Now define $\eta_j\in\R$ as
\[
\eta_j=\frac{q_j}{p_j}\theta_j-\frac{1}{\beta_j}\lambda\gamma=\frac{q_j}{p_j}(\theta_j-\gamma)\;;
\]
in particular
\begin{equation}
\gamma+\frac{1}{\lambda}\beta_j\eta_j=\theta_j\;.
\label{eq:ga}
\end{equation}
It is easy to check that $\eta_1+\cdots+\eta_k=0$; then 
H\"older's inequality yields
\begin{equation}
\int_D \prod_{j=1}^k |f_j(z)|^{q_j}\,d\mu(z)\le\prod_{j=1}^k\left[\int_D |f_j(z)|^{\beta_jq_j}\delta(z)^{\beta_j\eta_j}\,d\mu(z)\right]^{1/\beta_j}\;.
\label{eq:Cuno}
\end{equation}
Now, Lemma~\ref{th:last} implies that $\delta^{\beta_j\eta_j}\mu$ is $(\lambda,\gamma+\frac{1}{\lambda}\beta_j\eta_j)$-skew Carleson, that is, $(\lambda,\theta_j)$-skew Carleson,  by \eqref{eq:ga}. But $\lambda=\frac{q_j\beta_j}{p_j}$; hence Theorems~\ref{carthetaCarluno} and~\ref{carthetaCarldue} imply that $\delta^{\beta_j\eta_j}\mu$ is $(p_j,q_j\beta_j;\alpha_j)$-skew Carleson, with $\alpha_j=(n+1)(\theta_j-1)$. Therefore
%
\begin{equation}
\left[\int_D |f_j(z)|^{\beta_j q_j}\delta(z)^{\beta_j\eta_j}\,d\mu(z)\right]^{1/\beta_j}\preceq \|f_j\|^{q_j}_{p_j,(n+1)(\theta_j-1)}
\label{eq:Cdue}
\end{equation}
for $j=1,\ldots, k$, and \eqref{eq:main} is proved (see also Remark~\ref{rem:finale} below). 

Assume now that \eqref{eq:main} holds for any $f_j\in A^{p_j}\bigl(D,(n+1)(\theta_j-1)\bigr)$ with $j=1,\ldots,k$; we would like to prove by induction that $\mu$ is a $(\lambda,\gamma)$-skew Carleson measure with $\|\mu\|_{\lambda,\gamma}\preceq C$. If $k=1$ there is nothing to prove, so we can assume $k\ge 2$.

Assume first $\lambda\ge 1$, and let $\alpha_j=(n+1)(\theta_j-1)$ for $j=1,\ldots,k$. Choose $\sigma_1,\ldots,\sigma_k\in\N^*$ such that
\[
p_j\sigma_j>\max\{1,\theta_j\}
\]
for all $j=1,\ldots,k$, and
\[
\sum_{j=1}^k q_j\sigma_j>\lambda\gamma\;,
\]
and set
\[
r_j=(n+1)\left[\frac{\sigma_j}{2}-\frac{\theta_j}{p_j}\right]
\]
for all $j=1,\ldots,k$. 

For any $a\in D$ and $j=1,\ldots,k$, consider
\[
f_{j,a}(z)=k_a(z)^{\sigma_j}\delta(a)^{r_j}\;.
\]
Then, since $\alpha_j<(n+1)(p_j\sigma_j-1)$ by the choice of~$\sigma_j$, applying 
Proposition~\ref{intest} we obtain
\[
\|k_a^{\sigma_j}\|_{p_j,\alpha_j}=\|k_a\|^{\sigma_j}_{p_j\sigma_j,\alpha_j}\preceq
\delta(a)^{\frac{1}{p_j}\left[\alpha_j-(n+1)\left(\frac{p_j\sigma_j}{2}-1\right)\right]}
=\delta(a)^{-r_j}\;,
\]
and hence
\[
\|f_{j,a}\|_{p_j,\alpha_j}\preceq 1
\]
for $j=1,\ldots,k$. Thus \eqref{eq:main} yields
\begin{equation}
\int_D\prod_{j=1}^k |f_{j,a}(z)|^{q_j}\,d\mu(z)\le C \prod_{j=1}^k\|f_{j,a}\|^{q_j}_{p_j,\alpha_j}
\preceq C\;.
\label{eq:mdue}
\end{equation}
Now recall that
\[
\prod_{j=1}^k |f_{j,a}(z)|^{q_j}=|k_a(z)|^{\sum_j q_j\sigma_j}\delta(a)^{\sum_j q_jr_j}\;.
\]
We have
\[
\sum_{j=1}^k q_jr_j=(n+1)\sum_{j=1}^k\left[\frac{q_j\sigma_j}{2}-\theta_j\frac{q_j}{p_j}\right]
=\frac{n+1}{2}\sum_{j=1}^k q_j\sigma_j-(n+1)\lambda\gamma\;,
\]
so, setting $s=\sum_j \sigma_jq_j$, \eqref{eq:mdue} becomes
\[
\delta^{(n+1)\left(\frac{s}{2}-\lambda\gamma\right)}B^s\mu\preceq C\;,
\]
and Theorem~\ref{carthetaCarluno} implies that $\mu$ is $(\lambda,\gamma)$-Carleson with
\[
\|\mu\|_{\lambda,\gamma} \approx \|\delta^{(n+1)\left(\frac{s}{2}-\lambda\gamma\right)}B^s\mu\|_\infty \preceq C\;.
\]
%

We are left with the case $0<\lambda<1$. 
We argue again by induction on~$k$. If $k=1$, it is the definition of skew Carleson measure; so assume the assertion holds for $k-1$. Set 
\[
\tilde\lambda=\sum_{j=1}^{k-1}\frac{q_j}{p_j}\qquad\mathrm{and}\qquad
\tilde\gamma=\frac{1}{\tilde\lambda}\sum_{j=1}^{k-1}\theta_j\frac{q_j}{p_j}\;.
\]
Fix a function $g\in A^{p_k}\bigl(D,(n+1)(\theta_k-1)\bigr)$, and set $\mu_k=|g|^{q_k}\mu$. Then 
\eqref{eq:main} yields
\[
\int_D \prod_{j=1}^{k-1}|f_j(z)|^{q_j}\,d\mu_k(z)\le C \|g\|^{q_k}_{p_k,(n+1)(\theta_k-1)}
\prod_{j=1}^{k-1}\|f_j\|^{q_j}_{p_j,(n+1)(\theta_j-1)}
\]
for all $f_j\in A^{p_j}\bigl(D,(n+1)(\theta_j-1)\bigr)$ with $j=1,\ldots,k-1$. By induction, this means that $\mu_k$ is a $(\tilde\lambda,\tilde\gamma)$-skew Carleson measure with $\|\mu_k\|_{\tilde\lambda,\tilde\gamma}\preceq C\|g\|^{q_k}_{p_k,(n+1)(\theta_k-1)}$. Since $\tilde\lambda<\lambda<1$, and $\tilde\gamma>1-\frac{1}{n+1}$, Theorem~\ref{carthetaCarldue} implies that $\delta^{-(n+1)(\tilde\gamma-\frac{t}{2})}B^t\mu_k\in L^{1/(1-\tilde\lambda)}
\bigl(D, (n+1)(\tilde\gamma-1)\bigr)$ for all $t>\tilde\lambda\tilde\gamma+\frac{n}{n+1}(1-\tilde\lambda)$, with
\[
\left\|\delta^{-(n+1)(\tilde\gamma-\frac{t}{2})}B^t\mu_k\right\|_{1/(1-\tilde\lambda), (n+1)(\tilde\gamma-1)}\preceq C\|g\|^{q_k}_{p_k,(n+1)(\theta_k-1)}\;.
\]
Writing explicitely the previous formula we obtain
\[
\left[\int_D\left[\int_D |k_a(z)|^t|g(z)|^{q_k}\,d\mu(z)\right]^{1/(1-\tilde\lambda)}
\delta(a)^{-\frac{n+1}{1-\tilde\lambda}(\tilde\gamma-\frac{t}{2})}\delta(a)^{(n+1)(\tilde\gamma-1)}
\,d\nu(a)\right]^{1-\tilde\lambda}
\!\!\!\!\!
\preceq C\|g\|^{q_k}_{p_k,(n+1)(\theta_k-1)}\;,
\]
that is
\[
\|S^{s,q_k}_{\mu,t}g\|_{1/(1-\tilde\lambda),(n+1)(\tilde\gamma-1)}
\preceq C\|g\|^{q_k}_{p_k,(n+1)(\theta_k-1)}\;,
\]
where $s=\frac{t}{2}-\tilde\gamma$. Choosing $t>\frac{1}{p_k}\max\{1,\theta_k,p_k-1-\theta_k\}$ such that $s>0$, 
we deduce from Proposition~\ref{lemmatretre} that $\mu$ is a $(\lambda^*,\gamma^*)$-skew Carleson measure with $\|\mu\|_{\lambda^*,\gamma^*}\preceq C$, where
\[
\lambda^*=1+\frac{q_k}{p_k}-(1-\tilde\lambda)=\lambda\quad\mathrm{and}\quad
\gamma^*=\frac{1}{\lambda^*}\left(\theta_k\frac{q_k}{p_k}+\tilde\gamma\tilde\lambda\right)=\gamma\;,
\]
and we are done.
\end{proof}

\begin{remark}
\label{rem:finale}
If $\mu$ is a $(\lambda,\gamma)$-skew Carleson measure, we can estimate the constant $C$ in \eqref{eq:main}. Fix $r\in(0,1)$. Then Lemmas~\ref{sei} and~\ref{sette} yield 
\[
(\widehat{\delta^{\beta_j\eta_j}\mu})_r \approx \delta^{(n+1)\beta_j\eta_j}\hat\mu_r\;.
\]
If $\lambda\ge 1$ we can now use \eqref{norme} to get
\[
\begin{aligned}
\|\delta^{\beta_j\eta_j}\mu\|_{p_j,q_j\beta_j;\alpha_j}
&\approx \|\delta^{-(n+1)(\lambda\theta_j-1)}(\widehat{\delta^{\beta_j\eta_j}\mu})_r \|_\infty\\
&\approx \|\delta^{-(n+1)(\lambda\theta_j-1-\beta_j\eta_j)}\hat\mu_r\|_\infty
= \|\delta^{-(n+1)(\lambda\gamma-1)}\hat\mu_r\|_\infty
\approx \|\mu\|_{\lambda,\gamma}\;.
\end{aligned}
\]
Analogously, if $0<\lambda<1$ we can use \eqref{norme2} to get
\[
\begin{aligned}
\|\delta^{\beta_j\eta_j}\mu\|_{p_j,q_j\beta_j;\alpha_j}
&\approx \|\delta^{-(n+1)(\theta_j-1)\lambda}(\widehat{\delta^{\beta_j\eta_j}\mu})_r \|_{\frac{1}{1-\lambda}}\\
&\approx \|\delta^{-(n+1)(\lambda\theta_j-\lambda-\beta_j\eta_j)}\hat\mu_r\|_{\frac{1}{1-\lambda}}
= \|\delta^{-(n+1)(\gamma-1)\lambda}\hat\mu_r\|_{\frac{1}{1-\lambda}}
\approx \|\mu\|_{\lambda,\gamma}\;.
\end{aligned}
\]
Therefore in both cases \eqref{eq:Cuno} and \eqref{eq:Cdue} yield 
\[
C\approx \|\mu\|_{\lambda,\gamma}^{\sum_j q_j}\;.
\]
\end{remark}

\end{document}